
\documentclass[12pt,reqno]{amsart}
\usepackage{amsfonts}
\usepackage{a4wide}

\numberwithin{equation}{section}

\newtheorem{theorem}{Theorem}[section]
\newtheorem{proposition}[theorem]{Proposition}
\newtheorem{corollary}[theorem]{Corollary}
\newtheorem{lemma}[theorem]{Lemma}

\theoremstyle{definition}

\newtheorem{definition}[theorem]{Definition}

\theoremstyle{remark}
\newtheorem{remark}[theorem]{Remark}

\begin{document}

\title[ Scattering solutions
 ]
{  Scattering for the focusing $L^{2}$ -supercritical and $\dot H^2$-subcritical biharmonic
    NLS Equations
}

 \author{Qing Guo
}

\address{College of Science, Minzu University of China, Beijing 100081, China\\}

\email{guoqing@amss.ac.cn}





\begin{abstract}
We consider the focusing $\dot H^{s_c}$-critical biharmonic Schr\"odinger
equation, and prove a global wellposedness and scattering result for 
the radial  data $u_0\in H^2(\mathbb R^N)$ satisfying  $ M(u_0)^{\frac{2-s_c}{s_c}}E(u_0)<M(Q)^{\frac{2-s_c}{s_c}}E(Q)
$ and  $ \|u_{0}\|^{\frac{2-s_c}{s_c}}_{2}\|\Delta
u_{0}\|_{2}<\|Q\|^{\frac{2-s_c}{s_c}}_{2}\|\Delta Q\|_{2}, $  where $s_c\in(0,2)$ and $Q$ is the ground state of
$\Delta^2Q+(2-s_c)Q-|Q|^{p-1}Q=0$.
\end{abstract}

\maketitle
MSC: 35Q55, 35Q40

Keywords:  Biharmonic; NLS;  Focusing; Scattering

\section{Introduction}

The biharmonic Schr\"odinger equations, which are also called the fourth-order Schr\"odinger equations,
\begin{equation}\label{0}
 iu_{t}+\Delta^2 u-\varepsilon\Delta u+f(|u|^{2})u=0
                          \end{equation}
                        with $\varepsilon=\pm1$ or $\varepsilon=0$
were introduced by Karpman \cite{Karp} and Karpman and Shagalov \cite{Ka-Sh} to take into account the
role of small fourth-order dispersion terms in the propagation of intense laser beams in a bulk medium
with Kerr nonlinearity. Sharp dispersive estimates for the biharmonic Schr\"odinger operator
have recently been obtained in \cite{Be-Ko-Sa}, while specific nonlinear biharmonic Schr\"odinger equations
as  \eqref{0} were discussed in \cite{Guo-Wang,H-H-W,Fi-Il-Pa,Se}.
Related equations also appeared in \cite{F-I-S,H-J,Se2}.
 For a pure power-type  nonlinearity, i.e., $f(|u|^{2})u=\mu|u|^{p-1}u$, the equation \eqref{0} is subcritical
 for $N\leq4$ or for $p<1+\frac{8}{N-4}$ ($N>4$). When $N\geq5$, criticality in the energy space $H^2(\mathbb R^N)$
  appears with the power
 $p=1+\frac{8}{N-4}$. Fibich, Ilan and Papanicolaou in \cite{Fi-Il-Pa} describe various properties of the equation in the
 subcritical regime, with part of their analysis relying on very interesting numerical developments.
 Segata in \cite{Se}  proved scattering in $\mathbb R^1$ for the cubic nonlinearity; while in higher dimensions,
 the scattering results were obtained in \cite{MiaoWuZhang,Pa3}.
 Global well-posedness and scattering for the energy critical case
 was considered in \cite{Pa1,Pa4,MiaoXuZhao,MiaoXu}, while in \cite{PS}, the authors
proved the same results for the mass-critical fourth-order Schr\"odinger
equation in high dimensions. As discussed in \cite{Pa1}, the scattering results for the subcritical defocusing case, i.e.,
for $f(|u|^{2})u=|u|^{p-1}u$
 with $1+\frac{8}{N}<p<1+\frac{8}{N-4}$,  could  be obtained following the strategy in Lin and Strauss \cite{Lin-Str},
 see also \cite{TC}.
However, to the authors' knowledge,  there have not been any scattering
 results for the focusing case ($f(|u|^{2})u=-|u|^{p-1}u$) in the subcritical regime.

In this paper, we
  consider the focusing $L^{2}$ -supercritical and $\dot H^2$-subcritical biharmonic
    nonlinear Schr\"odinger equation
\begin{equation}\label{1.1}
\left\{ \begin{aligned}
         \ iu_{t}+\Delta^2 u-|u|^{p-1}u&=0,\ \ \ (x,t)\in \mathbb R^{N}\times \mathbb R, \\
                  \ u(x,0)&=u_{0}(x)\in H^{2}(\mathbb R^{N}),
                          \end{aligned}\right.
                          \end{equation}
where $u(x,t)$ is a complex-valued function in $\mathbb R^N\times\mathbb R$ with the space dimension $N$
satisfying $1+\frac{8}{N}<p<1+\frac{8}{N-4}$ (when $N\leq 4$, $1+\frac{4}{N}<p<\infty$).
Equation  \eqref{1.1} admits two important conservation laws in energy space  ~$H^{2}(\mathbb R^N):$
\begin{align*}
Mass:\ \ \ \ M(u)(t)&\equiv \int|u(x,t)|^{2}dx=M(u_{0});\\
Energy:\ \ \ \ E(u)(t)&\equiv \frac{1}{2}\int|\Delta u(x,t)|^{2}dx-\frac{1}{p+1}\int|u(x,t)|^{p+1}dx=E(u_{0}).
\end{align*}
Moreover, it is easy to check that equation \eqref{1.1} is invariant under the
scaling~$u(x,t)\rightarrow\lambda^\frac{4}{p-1}u(\lambda
x,\lambda^2t)$  which also leaves the  norm of the homogeneous
Sobolev space  $\dot{H}^{s_c}(\mathbb R^{N})$  invariant,
where $s_c\in(0,2)$ is defined by $s_c=\frac{N}{2}-\frac{4}{p-1}$.
 That is why we also call this equation  $\dot H^{s_c}$-critical.
Other scaling invariant quantities are  $\|\Delta u\|_{L^2(\mathbb R^N)}\|u\|_{L^2(\mathbb R^N)}^{\frac{2-s_{c}}{s_{c}}}$
and  $E(u)M(u)^{\frac{2-s_{c}}{s_{c}}}.$

Equation \eqref{1.1} possesses a focusing nonlinearity ($f(|u|^{2})u=-|u|^{p-1}u$), so
 one cannot hope to a similar global result as in \cite{Pa1}.
Indeed, the existence of a nontrivial solution of the elliptic equation
\begin{align}\label{Q}
\Delta^2Q+(2-s_c)Q-|Q|^{p-1}Q=0,
\end{align}
which we refer to as ground state $Q\in H^2(\mathbb R^N)$, can be obtained by similar method to that used in \cite{lenzmann}.
We then conclude that solitary waves $u(x,t)=e^{i(2-s_c)t}Q(x)$  do not scatter.
One can refer to \cite{Fi-Il-Pa} for some similar
results.

Our aim in this paper is to obtain the following result of  scattering   for the  solutions of \eqref{1.1}
with radial data. This scattering result would complement the very recent analysis of Boulenger and Lenzmann \cite{lenzmann}.

\begin{theorem}\label{th1}
Let $u_0\in H^2$ be radial and let $u$ be the corresponding solution
to \eqref{1.1} with maximal forward time interval of existence $I\subset\mathbb R$.
Suppose $ M(u_{0})^{\frac{2-s_c}{s_c}}E(u_{0})<M(Q)^{\frac{2-s_c}{s_c}}E(Q),
$ where $Q$ is the solution of \eqref{Q}. If $$ \|u_{0}\|^{\frac{2-s_c}{s_c}}_{L^2(\mathbb R^N)}\|\Delta
u_{0}\|_{L^2(\mathbb R^N)}<\|Q\|^{\frac{2-s_c}{s_c}}_{L^2(\mathbb R^N)}\|\Delta Q\|_{L^2(\mathbb R^N)}, $$ then
$I=(-\infty,+\infty)$, and $u$ scatters in $H^2(\mathbb R^N)$. That is there
exists $\phi_\pm\in H^2(\mathbb R^N)$ such that
$\lim_{t\rightarrow\pm\infty}\|u(t)-e^{it\Delta^2}\phi_\pm\|_{H^2(\mathbb R^N)}=0$.
\end{theorem}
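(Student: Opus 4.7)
The plan is to follow the Kenig--Merle concentration--compactness/rigidity roadmap, adapted to the biharmonic flow as in \cite{Pa1,MiaoWuZhang,Pa4}.

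First, I would establish the variational preliminaries. Using the sharp Gagliardo--Nirenberg inequality
$$\|u\|_{p+1}^{p+1}\le C_{GN}\|\Delta u\|_2^{\frac{N(p-1)}{4}}\|u\|_2^{p+1-\frac{N(p-1)}{4}},$$
whose optimizer is the ground state $Q$ of \eqref{Q}, together with the conservation of mass and energy, a standard continuity argument propagates the sub-threshold hypothesis: $\|u(t)\|_2^{(2-s_c)/s_c}\|\Delta u(t)\|_2$ stays strictly below $\|Q\|_2^{(2-s_c)/s_c}\|\Delta Q\|_2$ on $I$. This gives a uniform $H^2$ bound and, combined with local well-posedness, global existence $I=\mathbb R$. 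The same analysis yields a coercive inequality
$$8\|\Delta u(t)\|_2^2-\tfrac{2N(p-1)}{p+1}\|u(t)\|_{p+1}^{p+1}\ge \delta\|\Delta u(t)\|_2^2$$
for some $\delta>0$, which will drive the rigidity step.

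Second, I would set up a scattering criterion via Strichartz theory for $e^{it\Delta^2}$ (see \cite{Pa1,Pa3}), defining a critical Strichartz-type norm $\|\cdot\|_{S(\dot H^{s_c})}$ with the properties that finiteness on $[0,\infty)$ implies $H^2$-scattering, small $\dot H^{s_c}$ data have finite norm, and a perturbation lemma holds. Introduce the threshold $\mathcal E_c=\sup\{\mathcal E:\;\text{every radial }u_0\text{ obeying the hypotheses with }M(u_0)^{(2-s_c)/s_c}E(u_0)\le\mathcal E\text{ scatters}\}$ and suppose for contradiction $\mathcal E_c<M(Q)^{(2-s_c)/s_c}E(Q)$. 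Applying a linear profile decomposition in $H^2$ for the biharmonic propagator restricted to radial functions (no Galilean transforms; translations absorbed by radiality, leaving scaling as the only symmetry) and the perturbation lemma, I extract a critical element $u_c$: a non-scattering global radial $H^2$ solution with $M(u_c)^{(2-s_c)/s_c}E(u_c)=\mathcal E_c$ whose trajectory is pre-compact in $H^2$ modulo a scaling parameter $\lambda(t)$.

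For the rigidity step, I would use a radial cutoff $\phi_R$ with $\phi_R(x)=|x|^2$ for $|x|\le R$ and $\phi_R$ constant for $|x|\ge 2R$, and compute
$$\frac{d^2}{dt^2}\int \phi_R|u_c|^2\,dx=8\|\Delta u_c\|_2^2-\tfrac{2N(p-1)}{p+1}\|u_c\|_{p+1}^{p+1}+\mathcal R(R,t),$$
where $\mathcal R(R,t)$ collects errors supported in $R\le|x|\le 2R$ and involving $\nabla^k\phi_R$ for $k\le 4$. Using the radial Sobolev embedding and the compactness of $u_c$ (which concentrates mass in $|x|\le R$ for $R$ large), $\mathcal R(R,t)\to 0$ uniformly in $t$ as $R\to\infty$. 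Non-scattering combined with coercivity gives $\|\Delta u_c(t)\|_2\ge c>0$ uniformly in $t$, so the right-hand side is bounded below by a positive constant for $R$ large, forcing quadratic-in-time growth of $\int\phi_R|u_c|^2\,dx$; this contradicts the bound $\int\phi_R|u_c|^2\,dx\le R^2M(u_c)$. The main obstacle is precisely this last step: the biharmonic virial produces substantially more error terms than the Laplacian case (involving $\Delta^2\phi_R$, higher derivatives of $\phi_R$, and mixed products of $\Delta u_c$ with lower derivatives), all of which must be dominated uniformly in $R$ by $H^2$-quantities of $u_c$ on the annulus and then killed by compactness; this closely parallels the virial analysis of Boulenger--Lenzmann \cite{lenzmann}, now in the sub-threshold regime rather than for blowup.
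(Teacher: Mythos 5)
Your roadmap (Kenig--Merle concentration--compactness plus rigidity, adapted to the biharmonic flow with the sharp Gagliardo--Nirenberg inequality centered on $Q$) is exactly what the paper does, and the overall structure — coercivity, small-data and perturbation theory, profile decomposition, critical element, virial — is correct. However, there are two genuine problems in your proposal.

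\textbf{(1) Wrong virial quantity.} You propose computing $\frac{d^2}{dt^2}\int\phi_R|u_c|^2\,dx$ with $\phi_R\approx|x|^2$ and assert that it equals $8\|\Delta u_c\|_2^2-\frac{2N(p-1)}{p+1}\|u_c\|_{p+1}^{p+1}+\mathcal R(R,t)$. That identity is wrong for the biharmonic flow. From $iu_t=-\Delta^2u+|u|^{p-1}u$ one gets
\begin{equation*}
\frac{d}{dt}\int\phi_R|u|^2\,dx=-2\,\mathrm{Im}\int\Delta\phi_R\,\bar u\,\Delta u\,dx-4\,\mathrm{Im}\int\nabla\phi_R\cdot\nabla\bar u\,\Delta u\,dx,
\end{equation*}
so $\frac{d}{dt}\int\phi_R|u|^2$ already contains $\Delta u$ (not $\nabla u$ as for the second-order NLS). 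Differentiating once more inserts $u_t\sim i\Delta^2u$ inside these integrals and produces, after integration by parts, terms of the type $\|\nabla\Delta u\|_2^2$ — an $\dot H^3$ quantity that is \emph{not} controlled by the $H^2$ bounds you have, and that does not cancel. The correct localized quantity for this equation, used in the paper and in the Boulenger--Lenzmann analysis you cite, is the first-order Morawetz functional $A_R(t)=\mathrm{Im}\int x\,\phi(|x|/R)\cdot\nabla u\,\bar u\,dx$ (equivalently $\mathcal M_\phi[u]=2\,\mathrm{Im}\int\nabla\phi\cdot\nabla u\,\bar u$), which for the biharmonic equation is \emph{not} $\frac{d}{dt}\int\phi_R|u|^2$. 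One differentiates $A_R$ once to obtain $A_R'(t)=-4\|\Delta u\|_2^2+\frac{N(p-1)}{p+1}\|u\|_{p+1}^{p+1}+o_R(1)$, and the contradiction comes from the \emph{linear}-in-$t$ decay of $A_R$ against the uniform bound $|A_R(t)|\leq C_R\|u\|_{H^2}^2$, not from quadratic growth of a weighted mass.

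\textbf{(2) Unnecessary (and unjustified) scaling modulation.} You state that the critical element's trajectory is precompact in $H^2$ ``modulo a scaling parameter $\lambda(t)$.'' The paper's profile decomposition for radial $H^2$-bounded sequences involves \emph{only time shifts} — no scaling and no translations — precisely because the $\dot H^{s_c}$ scaling is not an $H^2$ isometry and the radial $H^2$ boundedness pins the frequency scale (this follows the Holmer--Roudenko construction for the 3D cubic NLS). Consequently the critical element is genuinely precompact in $H^2$, which yields the uniform-in-$t$ tail smallness needed in the rigidity step. If instead compactness only held modulo $\lambda(t)$, your claim that $\mathcal R(R,t)\to0$ uniformly in $t$ would not follow; you would first have to prove $\lambda(t)$ is bounded away from $0$ and $\infty$, an extra argument that your proposal does not supply.
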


Our paper is organized as follows. We fix notations in the end of section 1. In section 2,
We recall the local theory for \eqref{1.1} established by \cite{Pa1}.
After that, we introduce the inhomogeneous Strichartz's estimates, upon
which we sketch the proof of the small data scattering and the perturbation theory.
The variational structure of the ground state of an elliptic problem is given in section 3.
In section 4, we prove a dichotomy proposition of global well-posedness versus blowing up, which
yields the comparability of the total  energy and the kinetic energy.
The concentration compactness principle is used in section 5 to give a critical element, which
yields a contradiction through a virial-type estimate in section 6, concluding the proof of Theorem \ref{th1}.

\textbf{Notations:} In what follows, we denote by $c$ a generic constant that is allowed to depend on $N$ and $p$.
The exact value of that constant may change from one to another. We write  $C_a$ when there is more dependence.
We let $L^r=L^r(\mathbb R^N)$ be the usual Lebesgue spaces with the norm
defined by $\|\cdot\|_r$,  and $L^q(I,L^r)$ be the spaces of measurable functions from
an interval $I\subset \mathbb R$ to $L^q$ whose $L^q(I,L^r)$-norm is finite, where
$L^q(I,L^r)=\left(\int_I\|u(t)\|_{r}^qdt\right)^{\frac1q}$.
Moreover, we define the Fourier transform  on $\mathbb{R}^{N}$ by
 $\hat{f}(\xi)=(2\pi)^{-N/2}\int e^{-ix\xi}f(x)dx$.
 For $s\in \mathbb R$,
the pseudo-differential operator $(-\Delta)^s$ (or denoted by $|\nabla|^{2s}$) is defined by
$\widehat{(-\Delta)^sf}(\xi)\equiv \mathcal |\xi|^{2s}\hat{f}(\xi),$
which in turn defines the
 homogeneous Sobolev space $\dot{H}^{s}=\dot{H}^{s}(\mathbb{R}^{N})
 \equiv\left\{f\in\mathcal S'(\mathbb R^N): \int|\xi|^{2s}|\hat{f}(\xi)|^{2}d\xi<\infty
\right\}$
with its norm defined by $\|f\|_{\dot{H}^{s}}=\|(-\Delta)^sf\|_2$,
where $\mathcal S'(\mathbb R^N)$ denotes the space of tempered distributions.

\section{Local theory and Strichartz estimates }

We start in this section by recalling   the Strichartz estimates
 established by Pausader \cite{Pa1}.
 We say a pair $(q,r)$ is Schr\"odinger admissible, for short S-admissible, if
 $2\le q,r\le\infty$, $(q,r,N)\ne (2,\infty,2)$, and
 $\frac{2}{q}+\frac{N}{r}=\frac{N}{2}$.
 Also we use the terminology that a pair $(q,r)$ is biharmonic  admissible, for short B-admissible, if
 $2\le q,r\le\infty$, $(q,r,N)\ne (2,\infty,4)$, and
 $\frac{4}{q}+\frac{N}{r}=\frac{N}{2}$.

 The Strichartz estimates are stated as follows.
 Let $u\in C(I,H^{-4}(\mathbb{R}^N))$ be a solution of
 \begin{align}\label{eq}
 u(t)=e^{it(\Delta^2+\varepsilon\Delta)}u_0+i\int_0^te^{i(t-s)(\Delta^2+\varepsilon\Delta)}h(s)ds
 \end{align}
 with $\varepsilon\in\{-1,0,1\}$ on an interval $I=[0,T]$. If $\varepsilon=1$, suppose also $|I|\leq1$.
 For any B-admissible pairs $(q,r)$ and $(\bar q, \bar r)$,
 \begin{equation}\label{stri}
\Vert u\Vert_{L^q(I,L^r)}\leq c (\Vert u_0\Vert_{L^2}+ \Vert
h\Vert_{L^{\bar{q}'}(I,L^{\bar{r}'})}),
\end{equation}
where $\bar q'$ and $\bar r'$ are the conjugate exponents of $\bar
q$ and $\bar r$, i.e., $\frac1{\bar q}+\frac1{\bar q'}=\frac1{\bar
r}+\frac1{\bar r'}=1$. Besides, for any S-admissible pairs $(q,r)$
and $(a,b)$, and any $s\geq0$,
 \begin{equation}\label{sstri}
\Vert |\nabla|^{s}u\Vert_{L^q(I,L^r)}\leq c (\Vert
|\nabla|^{s-\frac2q}u_0\Vert_{L^2}+ \Vert
|\nabla|^{s-\frac2q-\frac2a}h\Vert_{L^{a'}(I,L^{b'})}).
\end{equation}
 From Sobolev embedding,  estimates \eqref{sstri} implies \eqref{stri}.
 We define two norms for convenience to study the  $\dot H^{s_c}$-critical equation \eqref{1.1} by
\begin{align}\label{Z(I)}
&\Vert u\Vert_{Z(I)}=\Vert
u\Vert_{L^{\frac{(N+4)(p-1)}{4}}(I,L^{\frac{(N+4)(p-1)}{4}})},\
\ \ \
 \Vert u\Vert_{Z'(I)}=\Vert
u\Vert_{L^{\frac{(N+4)(p-1)}{4p}}(I,L^{\frac{(N+4)(p-1)}{4p}})}.
\end{align}
Thus a direct consequence of \eqref{sstri} and the  Sobolev's inequality is that,
  if $u\in C(I,H^{-4}(\mathbb{R}^N))$ be a solution of \eqref{eq}
 with $u_0\in\dot H^2$ and $\nabla h\in L^{2}(I,L^{\frac{2N}{N+2}})$,
 then  $u\in C(I,\dot H^{2}(\mathbb{R}^N))$
 and for any B-admissible pairs $(q,r)$,
 \begin{equation}\label{319}
\Vert\Delta u\Vert_{L^q(I,L^r)}\leq c (\Vert \Delta u_0\Vert_{L^2}+
\Vert \nabla h\Vert_{L^{2}(I,L^{\frac{2N}{N+2}})}).
\end{equation}
A key feature of \eqref{319} is that the second derivative of $u$ is estimated using only
one derivative of the forcing term $h$.
Just as in \cite{Pa1}, the Strichartz estimates yield the following local well-posedness result.
\begin{proposition}\label{posedness}
Given any initial data  $u_{0}\in H^2$, any $p\in(1,\frac{N+4}{N-4})$ when $N\geq5$, and any $p>1$
when $N\leq4$, there exists $T>0$ and a unique solution  $u\in C([0,T],H^2)$ of \eqref{1.1} with initial data $u_0$.
The solution has conserved mass and energy. Besides, if $T^+$ is the maximal time of existence of $u$, then
$\lim_{t\rightarrow T^+}\|u(t)\|_{H^2}=+\infty$ when $T^+<\infty$. And the solution map $u_0\mapsto u$ is continuous in the
sense that for any $T\in(0,T^+)$, if $u_0^k\in H^2$ is a sequence converging in $H^2$ to $u_0$, and if
$u^k$ denotes the solution of \eqref{1.1} with initial data $u_0^k$, then $u^k$ is defined on $[0,T]$ for sufficiently large
$k$ and $u^k\rightarrow u$ in $C([0,T],H^2)$.

\end{proposition}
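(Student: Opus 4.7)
The plan is to obtain Proposition \ref{posedness} by a standard fixed-point argument applied to the Duhamel formulation
\[
\Phi(u)(t) = e^{it\Delta^{2}}u_{0} - i\int_{0}^{t} e^{i(t-s)\Delta^{2}} |u|^{p-1}u(s)\,ds,
\]
on a complete metric space of the form
\[
X_{T,M} = \Big\{ u \in C([0,T],H^{2}) :\ \|u\|_{L^{\infty}([0,T],H^{2})} + \|\Delta u\|_{S([0,T])} + \|u\|_{Z([0,T])} \leq M \Big\},
\]
where $S([0,T])$ is the intersection of $L^{q}([0,T],L^{r})$ norms over suitable B-admissible pairs $(q,r)$, and $Z([0,T])$ is the norm defined in \eqref{Z(I)}. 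Picking $M \sim \|u_{0}\|_{H^{2}}$ and then choosing $T$ small enough should yield a unique fixed point, which is the desired solution.

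The analytical content lies in the nonlinear estimates. Using \eqref{319} with $h = |u|^{p-1}u$, the goal is to show
\[
\|\nabla (|u|^{p-1}u)\|_{L^{2}([0,T],L^{\frac{2N}{N+2}})} \leq c\,\|u\|_{Z([0,T])}^{p-1}\,\|\Delta u\|_{L^{q}([0,T],L^{r})}
\]
for an appropriate B-admissible pair $(q,r)$, via Hölder's inequality together with the fractional chain rule (or a direct pointwise bound $|\nabla(|u|^{p-1}u)| \lesssim |u|^{p-1}|\nabla u|$ when $p\geq 1$) and Sobolev embedding to convert $\|\nabla u\|$ into $\|\Delta u\|$ in an $L^{r}$-type norm. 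The exponents must be checked to lie in the admissible range, which is where the assumption $p < 1+\frac{8}{N-4}$ (resp.\ $p<\infty$ when $N\leq 4$) enters: it guarantees one has enough room in Strichartz to absorb a positive power of $T$, making the map a strict contraction on $X_{T,M}$. The same estimates applied to the difference $\Phi(u)-\Phi(v)$ give uniqueness and Lipschitz dependence, hence the continuity statement $u^{k}\to u$ in $C([0,T],H^{2})$.

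Conservation of mass and energy is then inherited from the smooth regularization $u_{0}^{n}\to u_{0}$ in $H^{2}$ and the continuous dependence just proved. The blow-up alternative $\lim_{t\to T^{+}}\|u(t)\|_{H^{2}}=\infty$ when $T^{+}<\infty$ follows by contradiction: if $\liminf \|u(t_{n})\|_{H^{2}} < \infty$ along some $t_{n}\to T^{+}$, restart the local existence theorem at $t_{n}$ to obtain an existence interval of uniform length, contradicting maximality. The step I expect to be the main obstacle is the nonlinear estimate above when $p$ is not an odd integer: the standard Kato/Christ--Weinstein fractional chain rule applies to $|u|^{p-1}u$ only for $p>1$, so one must either appeal to this tool or, in low dimensions where $p$ may be close to $1$, argue via Strichartz at the level of one derivative using \eqref{sstri} and Sobolev embedding instead of \eqref{319}. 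Once that estimate is in hand, every other ingredient is routine.
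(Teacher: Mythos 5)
Your overall plan is the standard one, and it is essentially the argument the paper delegates to Pausader \cite{Pa1}: Duhamel formulation, Strichartz estimates (in particular \eqref{319}), Sobolev embedding, contraction mapping, continuation criterion by iterating local existence, and conservation laws by regularization. The paper itself does not prove Proposition \ref{posedness}; it cites \cite{Pa1}, and the spirit of what you wrote is the correct reconstruction.

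There is, however, one technical flaw in the way you have set up the fixed-point space. You propose to contract in the full norm of
\[
X_{T,M} = \Big\{ u \in C([0,T],H^{2}) :\ \|u\|_{L^{\infty}H^{2}} + \|\Delta u\|_{S} + \|u\|_{Z} \leq M \Big\},
\]
and you assert that the same estimates applied to $\Phi(u)-\Phi(v)$ give ``uniqueness and Lipschitz dependence, hence the continuity statement.'' This cannot work when $1<p<2$, because the map $z\mapsto |z|^{p-1}z$ is $C^{1}$ but not $C^{2}$, so the difference $\nabla\bigl(|u|^{p-1}u\bigr)-\nabla\bigl(|v|^{p-1}v\bigr)$ involves a term behaving like $|u|^{p-2}\nabla u\,(u-v)$, which is not controllable by $\nabla(u-v)$ and lower-order norms. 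Consequently the map $\Phi$ is \emph{not} a contraction in a norm that carries two derivatives, and the solution map is not Lipschitz in $H^{2}$ for small $p$ (indeed the statement in the paper only claims continuity). The correct device, used in the paper's own proof of Proposition \ref{sd}, is to take a set that \emph{constrains} the $H^{2}$-level Strichartz norms but equips it with the derivative-free metric $\|\cdot\|_{Z(I)}$; the difference estimate
\[
\|\Phi(u)-\Phi(v)\|_{Z(I)}\leq c\bigl(\|u\|_{Z(I)}^{p-1}+\|v\|_{Z(I)}^{p-1}\bigr)\|u-v\|_{Z(I)}
\]
is valid for all $p>1$ and gives the contraction and uniqueness. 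Continuity of $u_{0}\mapsto u$ in $C([0,T],H^{2})$ is then not a direct corollary of a Lipschitz bound; one recovers it from the uniform $H^{2}$-Strichartz bounds in $X_{T,M}$ combined with convergence in the weaker metric and an interpolation or weak-limit argument. You also need to extract a positive power of $T$ to close the contraction for large data; this is available precisely because $s_{c}<2$ (the problem is $H^{2}$-subcritical), so the $Z(I)$ and related norms carry subcritical scaling and shrink as $T\to 0$. With these two corrections — derivative-free contraction metric and the above refinement of the continuity argument — the rest of your outline (chain rule $|\nabla(|u|^{p-1}u)|\lesssim |u|^{p-1}|\nabla u|$, blow-up alternative, conservation laws by approximation) is sound.
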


The author in \cite{robert taggart} (Theorem 1.4 there)  studied the inhomogeneous
Strichartz estimates for dispersive operators in the abstract
setting, which indeed  yield the counterpart ones for the fourth-order
Schr\"odinger operators by the same method used in the proof of Corollary 7.1 in \cite{robert taggart}.
More precisely, we say that a pair  $(q,r)$ is $\dot{H}^{s}$-biharmomic admissible and denote it by
 $(q,r)\in\Lambda_{s}$ if $0\leq s<2$ and
$$\frac{4}{q}+\frac{N}{r}=\frac{N}{2}-s,\ \ \ \frac{2N}{N-2s}\leq r<\frac{2N}{N-4}.$$
Correspondingly, we call the pair $(q',r')$
dual~$\dot{H}^{s}$-biharmomic  admissible, denoted by
$(q',r')\in\Lambda'_{s}$, if
 $(q,r)\in\Lambda_{-s}$ and $(q',r')$ is the conjugate exponent pair of $(q,r).$
In particular, $(q,r)\in\Lambda_0$ is just a B-admissible pair, which is always denoted by $(q,r)\in\Lambda_ B$.
Combining the results obtained by \cite{Pa1} and \cite{robert taggart}, we can infer
the following  inhomogeneous Strichartz estimates on  $I=[0,T]$:
\begin{align}\label{inhomo}
\left\|\int_0^te^{i(t-t^1)\Delta^2}f(\cdot,s)ds\right\|_{L^q(I;L^r)}\leq c\|f\|_{L^{\tilde q'}(I;L^{\tilde r'})},
\ \ \ \ \forall\ \  (q,r)\in\Lambda_{s},\ \ \forall\ \  (\tilde q,\tilde r)\in\Lambda_{-s}.
\end{align}
We also refer to \cite{foschi1,kato1,keel-tao,vilela} for more precise  discussion on the inhomogeneous Strichartz estimates.

Note that in the definition of $\|\cdot\|_{Z(I)}$ and
$\|\cdot\|_{Z'(I)}$, the pair
$(\frac{(N+4)(p-1)}{4},\frac{(N+4)(p-1)}{4})\in\Lambda_{s_c}$ and
$(\frac{(N+4)(p-1)}{4p},\frac{(N+4)(p-1)}{4p})\in\Lambda'_{s_c}$
with $s_c=\frac2N-\frac4{p-1}$ defined in the introduction. Since
for any $(q,r)\in\Lambda_{s_c}$, we can check that $(\frac qp,\frac
rp)\in\Lambda'_{s_c}$, then the inhomogeneous Strichartz estimates
\eqref{inhomo} combined with the H\"older inequality give that
\begin{align}\label{in-stri}
\left\|\int_0^te^{i(t-s)\Delta^2}|u|^{p-1}u(s)ds\right\|_{L^q(I;L^r)}\leq c\||u|^{p-1}u\|_{L^{
\frac qp}(I;L^{\frac rp})}\leq c\left\|u\right\|^p_{L^q(I;L^r)}.
\end{align}

As a consequence of the Strichartz estimates introduced above, we can obtain the following proposition.
\begin{proposition}\label{sd}
Assume $u_{0}\in H^2$, $t_0\in I$ an interval of $\mathbb R$. Then
there exists  $\delta_{sd}>0$ such that if
 $\|e^{it\Delta^2}u_{0}\|_{Z(I)}\leq \delta_{sd}, $ then there exists a unique solution
 $u\in C(I,H^2)$ of \eqref{1.1} with initial data $u_0$. This solution has conserved mass and  energy,
 and satisfies
\begin{align}\label{sd1}
 \|u\|_{Z(I)}\leq
2\delta_{sd},\ \ \ \|u\|_{L^\infty(I,H^2)}\leq c\|u_0\|_{H^2}.
\end{align}
\end{proposition}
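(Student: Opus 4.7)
The plan is to run a standard contraction mapping argument on the Duhamel integral equation
\[
\Phi(u)(t)=e^{it\Delta^{2}}u_{0}-i\int_{0}^{t}e^{i(t-s)\Delta^{2}}|u|^{p-1}u(s)\,ds,
\]
working in a closed ball of the complete metric space
\[
X=\Bigl\{u\in C(I,H^{2}):\ \|u\|_{Z(I)}\leq 2\delta_{sd},\ \|u\|_{L^{\infty}(I,H^{2})}+\|\Delta u\|_{L^{q_{0}}(I,L^{r_{0}})}\leq A\|u_{0}\|_{H^{2}}\Bigr\},
\]
where $(q_{0},r_{0})$ is a chosen B-admissible pair (for instance one yielding the right scaling for the nonlinearity), and $A$ is a large constant to be fixed. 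The metric on $X$ will be the $Z(I)$-distance, in which $X$ is complete. First I would verify that $\Phi$ maps $X$ into itself; then I would show it is a strict contraction in $Z(I)$; the unique fixed point is the desired solution.

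For the $Z(I)$ bound, by the inhomogeneous Strichartz estimate \eqref{in-stri} together with the hypothesis,
\[
\|\Phi(u)\|_{Z(I)}\leq \|e^{it\Delta^{2}}u_{0}\|_{Z(I)}+c\|u\|_{Z(I)}^{p}\leq \delta_{sd}+c(2\delta_{sd})^{p},
\]
which is at most $2\delta_{sd}$ provided $\delta_{sd}$ is taken small enough (recall $p>1$). The same computation applied to $\Phi(u)-\Phi(v)$, after expanding $|u|^{p-1}u-|v|^{p-1}v$ via the pointwise bound by $(|u|^{p-1}+|v|^{p-1})|u-v|$ and applying H\"older, yields
\[
\|\Phi(u)-\Phi(v)\|_{Z(I)}\leq c\bigl(\|u\|_{Z(I)}^{p-1}+\|v\|_{Z(I)}^{p-1}\bigr)\|u-v\|_{Z(I)}\leq c(2\delta_{sd})^{p-1}\|u-v\|_{Z(I)},
\]
which is strictly less than $\tfrac{1}{2}\|u-v\|_{Z(I)}$ for $\delta_{sd}$ small. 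This gives existence, uniqueness within $X$, and the claimed bound $\|u\|_{Z(I)}\leq 2\delta_{sd}$.

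The remaining ingredient, and the step I expect to be the most delicate, is propagating the $H^{2}$ regularity with a bound depending only on $\|u_{0}\|_{H^{2}}$ (not on $\delta_{sd}^{-1}$). For this I would use the one-derivative estimate \eqref{319} applied to $\Delta u$:
\[
\|\Delta u\|_{L^{\infty}(I,L^{2})}+\|\Delta u\|_{L^{q_{0}}(I,L^{r_{0}})}\leq c\|\Delta u_{0}\|_{2}+c\|\nabla(|u|^{p-1}u)\|_{L^{2}(I,L^{2N/(N+2)})},
\]
together with $|\nabla(|u|^{p-1}u)|\lesssim|u|^{p-1}|\nabla u|$. A H\"older split chosen so that the $|u|^{p-1}$ factor lands in an $L^{(p-1)q}L^{(p-1)r}$ norm with $(q,r)$ one of the $\dot H^{s_{c}}$-biharmonic admissible pairs interpolating $Z(I)$, and the $\nabla u$ factor lands in a B-admissible Strichartz norm controlled by $\|\Delta u\|$ via Sobolev embedding, yields
\[
\|\nabla(|u|^{p-1}u)\|_{L^{2}(I,L^{2N/(N+2)})}\leq c\|u\|_{Z(I)}^{p-1}\|\Delta u\|_{L^{q_{0}}(I,L^{r_{0}})}.
\]
Since $\|u\|_{Z(I)}\leq 2\delta_{sd}$, absorbing the nonlinear term on the left for $\delta_{sd}$ small closes the estimate and delivers $\|u\|_{L^{\infty}(I,H^{2})}\leq c\|u_{0}\|_{H^{2}}$. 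Conservation of mass and energy, as well as continuity in time with values in $H^{2}$, then follow as in Proposition \ref{posedness} by a limiting argument from smooth approximating data. The main obstacle is precisely the verification that the exponents in the H\"older split are simultaneously admissible and produce only the $Z(I)$-norm of $u$ in the small factor — once this is arranged the argument is purely mechanical.
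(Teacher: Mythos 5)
Your proposal follows the same overall strategy as the paper: contraction mapping for the Duhamel map in a ball of $C(I,H^2)$, with the $Z(I)$-metric, using the inhomogeneous estimate \eqref{in-stri} for the $Z$-bound and the derivative-saving estimate \eqref{319} to close the $H^2$ bound without losing the factor $\|u_0\|_{H^2}$. The only real difference is in how the H\"older split on $|u|^{p-1}\nabla u$ is organized. The paper places $\nabla u$ in $L^\infty_t L^{2N/(N-2)}_x$ (hence $\Delta u\in L^\infty_t L^2_x$ via Sobolev) and pairs it with $\|u\|_{L^{2(p-1)}_t L^{N(p-1)/2}_x}^{p-1}$, introducing this extra $\dot H^{s_c}$-admissible norm of $u$ as an additional smallness constraint on the ball; you instead keep $\|u\|_{Z(I)}^{p-1}$ as the small factor and absorb $\Delta u$ into an auxiliary B-admissible norm $L^{q_0}_tL^{r_0}_x$. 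The exponent check you defer does close: with $q_Z=r_Z=\tfrac{(N+4)(p-1)}{4}$ one finds $q_0=\tfrac{2(N+4)}{N-4}$ and $\tfrac{1}{r_0}=\tfrac{N^2+16}{2N(N+4)}$, which is B-admissible for $N\geq5$; this is the same split used in Pausader's work. Your variant is, if anything, a bit cleaner, since it uses the hypothesized smallness of $\|e^{it\Delta^2}u_0\|_{Z(I)}$ directly rather than also requiring smallness of the homogeneous flow in the auxiliary $L^{2(p-1)}_tL^{N(p-1)/2}_x$ norm, a point the paper leaves implicit. So the proof is correct and essentially the paper's argument, with a minor and arguably preferable choice of auxiliary norm.
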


\begin{proof}
 For
$\delta=\delta_{sd}$ and $M=c\|u_0\|_{H^2}$, we define a map as
$$\Phi(u)=
e^{i(t-t_0)\Delta^2}u_0+i\int_{t_0}^te^{i(t-s)\Delta^2}|u|^{p-1}u(s)ds,$$
and a set as
$$M_{M,\delta}=\{v\in C(I,H^2):\ \ \|v\|_{Z(I)}\leq2\delta,
\ \ \|v\|_{L^{2(p-1)}(I,L^{\frac{N(p-1)}{2}})}\leq2\delta,\ \ \  \|\Delta v\|_{L^\infty(I,L^2)}\leq2M
\}$$ equipped with the $Z(I)$ norm.
Then from the Strichartz estimates \eqref{319} and
\eqref{in-stri}, using the Sobolev embedding  and the H\"older inequalities,  we have for any $u\in M_{M,\delta}$,
\begin{align*}
\|\Phi(u)\|_{Z(I)}\leq\delta+c\|u\|_{Z(I)}^{p},\ \ \ \
\|\Phi(u)\|_{L^{2(p-1)}(I,L^{\frac{N(p-1)}{2}})}\leq\delta+c\|u\|_{Z(I)}^{p}
\end{align*}
and
\begin{align*}
\|\Delta\Phi(u)\|_{L^\infty(I,L^2)}&
\leq c\|\Delta u_0\|_2+c\|u\|_{L^{2(p-1)}(I,L^{\frac{N(p-1)}{2}})}^{p-1}\|\nabla u\|_{L^\infty(I,L^{\frac{2N}{N-2}})}\\
 &\leq c\|\Delta u_0\|_2+c\|u\|_{L^{2(p-1)}(I,L^{\frac{N(p-1)}{2}})}^{p-1}\|\Delta u\|_{L^\infty(I,L^2)}.
\end{align*}
Moreover, for any $u,v\in M_{M,\delta}$,
\begin{align*}
\|\Phi(u)-\Phi(v)\|_{Z(I)}\leq c(\|u\|_{Z(I)}^{p-1}+\|v\|_{Z(I)}^{p-1})\|u-v\|_{Z(I)}.
\end{align*}
From a standard argument, we can obtain that if $\delta$ is
sufficiently small, the map $u\mapsto \Phi(u)$ is a contraction map
on $M_{M,\delta}$. Thus,  the contraction mapping theorem  gives a
unique solution u in $M_{M,\delta}$ satisfying  \eqref{sd1}.
\end{proof}

From the small data theory (Proposition \ref{sd}) and using a similar argument
as in \cite{Pa1}, we can obtain the following  result of scattering, the proof of which is standard and we omit here.
\begin{proposition}\label{h2scattering}
Let $u(t)\in C(\mathbb R,H^2)$ be a solution of \eqref{1.1}. If
 $\|u\|_{Z(\mathbb R)}<\infty$,
then $u(t)$ scatters in $H^2$.
That is , there exists $\phi^{\pm}\in H^2$ such that
$\lim_{t\rightarrow\pm\infty}\|u(t)-e^{it\Delta}\phi^{\pm}\|_{H^2}=0.$
\end{proposition}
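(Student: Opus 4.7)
The plan is to construct the scattering states via Duhamel's formula and verify their convergence in $H^2$. Setting
$$\phi^{\pm} := u_0 + i\int_0^{\pm\infty} e^{-is\Delta^2}|u|^{p-1}u(s)\,ds,$$
one has $u(t) - e^{it\Delta^2}\phi^{\pm} = -i\int_t^{\pm\infty} e^{i(t-s)\Delta^2}|u|^{p-1}u(s)\,ds$, and the entire task reduces to showing this tail vanishes in $H^2$ as $t\to\pm\infty$, or equivalently that $\{e^{-it\Delta^2}u(t)\}_{t\in\mathbb R}$ is $H^2$-Cauchy.

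The hypothesis $\|u\|_{Z(\mathbb{R})}<\infty$ is exploited in the standard way: for a sufficiently small $\eta>0$, partition $\mathbb{R}=\bigcup_{k=1}^{n} I_k$ into finitely many consecutive intervals with $\|u\|_{Z(I_k)}\leq \eta$. On each $I_k$, applying \eqref{319} with $h=|u|^{p-1}u$ and using H\"older together with the embedding $\dot H^2\hookrightarrow W^{1,2N/(N-2)}$ gives
$$\|\nabla(|u|^{p-1}u)\|_{L^2(I_k,L^{2N/(N+2)})} \leq c\|u\|_{L^{2(p-1)}(I_k,L^{N(p-1)/2})}^{p-1}\|\Delta u\|_{L^\infty(I_k,L^2)},$$
exactly as in the proof of Proposition \ref{sd}. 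Interpolating $L^{2(p-1)}(I_k,L^{N(p-1)/2})$ between $Z(I_k)$ (small by construction) and admissible Strichartz norms controlled by $\|\Delta u\|_{L^\infty(I_k,L^2)}$ lets one absorb the nonlinear contribution on the left, yielding $\|\Delta u\|_{L^q(I_k,L^r)}\leq C_k$ for every B-admissible $(q,r)$. Concatenation over the finite partition, combined with mass conservation, then produces the global space-time bounds $\|u\|_{L^q(\mathbb{R},L^r)}+\|\Delta u\|_{L^q(\mathbb{R},L^r)}<\infty$ for every B-admissible $(q,r)$.

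Armed with these global bounds, the dual Strichartz estimate applied to $\int_t^{t'} e^{i(t-s)\Delta^2}|u|^{p-1}u(s)\,ds$ in $H^2$ produces an upper bound of the form of a nonlinear product supported only on the tail $[t,t']$, and this tends to zero as $t,t'\to\pm\infty$ by dominated convergence. This delivers the $H^2$-Cauchy property and hence $\phi^{\pm}\in H^2$ with $\|u(t)-e^{it\Delta^2}\phi^{\pm}\|_{H^2}\to 0$. The main technical hurdle is the bootstrap in the middle paragraph, where the exponents defining the $Z$-norm must match the derivative-count in \eqref{319} so that the nonlinearity factors as (small from $\|u\|_{Z(I_k)}\leq\eta$) times an admissible norm of $\Delta u$ that can be absorbed; this compatibility is already verified in the proof of Proposition \ref{sd}, so the only new ingredient here is iterating that bookkeeping across the finite cover of $\mathbb R$.
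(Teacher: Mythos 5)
The paper omits the proof of this proposition, describing it as "standard," so there is nothing to compare against directly; your outline does follow the standard blueprint (Duhamel construction of $\phi^\pm$, reduction to vanishing of the Duhamel tail, finite partition with small $Z$-norm, Strichartz bootstrap, then dominated convergence). However, the central technical step as you wrote it has a genuine gap.

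The problem is the sentence claiming one can control $\|u\|_{L^{2(p-1)}(I_k,L^{N(p-1)/2})}$ by interpolating between $Z(I_k)$ and Strichartz norms controlled by $\|\Delta u\|_{L^\infty(I_k,L^2)}$. All the pairs here that lie on the $\dot H^{s_c}$-admissibility line $\Lambda_{s_c}$ are $Z$ itself, the target $L^{2(p-1)}L^{N(p-1)/2}$, and (via Sobolev from $L^\infty\dot H^{s_c}$) the endpoint $L^\infty L^{2N/(N-2s_c)}$; the quantities controlled by $\|\Delta u\|_{L^\infty L^2}$ through Sobolev live on $\Lambda_2$, a different line. In the $(1/q,1/r)$ plane, interpolating from $Z$ toward $\Lambda_2$ (or toward $L^\infty L^{2N/(N-2s_c)}$) moves the $1/q$ coordinate downward from $\frac{4}{(N+4)(p-1)}$. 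But the target has $1/q=\frac{1}{2(p-1)}$, which is \emph{larger} than $\frac{4}{(N+4)(p-1)}$ precisely when $N>4$. So for $N>4$ the target is not between $Z$ and anything you can bound, and the interpolation parameter $\theta=\frac{4-N}{8}$ is negative; the step fails outright in exactly the dimensions where $\dot H^2$-subcriticality is a real constraint. Even in $N\leq 3$, where the exponents align, the argument is circular as written: the norm $\|\Delta u\|_{L^\infty(I_k,L^2)}$ you interpolate against is precisely what you are trying to bound on $I_k$, and the proposition's hypotheses ($u\in C(\mathbb R,H^2)$, $\|u\|_{Z(\mathbb R)}<\infty$) give no a priori uniform $H^2$ bound, so you must either run a continuity argument or add $\sup_t\|u(t)\|_{H^2}<\infty$ as a hypothesis (which the paper's actual application, through Theorem \ref{dichotomy}, does supply).

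The repair is to avoid interpolation entirely. Since $(2(p-1),N(p-1)/2)\in\Lambda_{s_c}$, apply the inhomogeneous estimate \eqref{inhomo} with the nonlinearity factored so that $p-1$ copies of $u$ land in $Z(I_k)$ and one copy lands back in $L^{2(p-1)}(I_k,L^{N(p-1)/2})$; the exponents close because $(\tfrac{q}{p},\tfrac{r}{p})\in\Lambda'_{s_c}$ and H\"older allows the unbalanced split. This gives
$\|u\|_{L^{2(p-1)}(I_k,L^{N(p-1)/2})}\leq c\|u(t_k)\|_{\dot H^{s_c}}+c\eta^{p-1}\|u\|_{L^{2(p-1)}(I_k,L^{N(p-1)/2})}$,
which absorbs once $c\eta^{p-1}<\tfrac12$ and is uniform once one iterates over the finite cover starting from $t_0=0$, propagating the $H^2$ bound interval by interval. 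Your first and last steps (the Duhamel reduction and the tail estimate via dominated convergence) are fine as stated once this middle bootstrap is repaired.
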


Now we show a useful perturbation lemma as follows.
\begin{lemma}\label{perturb}
 For any given $A$, there exist $\epsilon_0=\epsilon_0(A,N,p)$ and
$c=c(A)$ such that for any $\epsilon\leq\epsilon_0$, any interval $I
=(T_1,T_2)\subset \mathbb R$ and any
 $\tilde{u}=\tilde{u}(x,t)\in H^2$ satisfying
$$ i\tilde{u}_{t}+\Delta^2 \tilde{u}-|\tilde{u}|^{p-1}\tilde{u}=e,$$
if for some $(q,r)\in\Lambda_{-s_c}$,
 $$\|\tilde{u}\|_{Z(I)}\leq A,\ \   \|e\|_{L^{q'}(I;L^{r'})}\leq \epsilon$$
and $$\|e^{i(t-t_0)\Delta^2}(u(t_0)-\tilde{u}(t_0)\|_{Z(I)}\leq
\epsilon,$$ then the solution $u\in C(I;H^2)$ of \eqref{1.1}
satisfying
 $$\|u-\tilde u\|_{Z(I)}\leq c(A)\epsilon.$$
\end{lemma}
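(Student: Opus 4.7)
The plan is to run a standard Strichartz-based bootstrap/perturbation argument on the difference $w=u-\tilde u$, which satisfies
\begin{equation*}
iw_t+\Delta^2 w=\bigl(|u|^{p-1}u-|\tilde u|^{p-1}\tilde u\bigr)-e,\qquad w(t_0)=u(t_0)-\tilde u(t_0),
\end{equation*}
with Duhamel representation
\begin{equation*}
w(t)=e^{i(t-t_0)\Delta^2}w(t_0)-i\int_{t_0}^t e^{i(t-s)\Delta^2}\bigl[(|u|^{p-1}u-|\tilde u|^{p-1}\tilde u)(s)-e(s)\bigr]\,ds.
\end{equation*}
Because the hypothesis $(q,r)\in\Lambda_{-s_c}$ means its conjugate lies in $\Lambda'_{s_c}$, the inhomogeneous Strichartz estimate \eqref{inhomo}, combined with the H\"older bound used in the derivation of \eqref{in-stri} but applied to $|u|^{p-1}u-|\tilde u|^{p-1}\tilde u$, yields on any subinterval $J\subset I$
\begin{equation*}
\|w\|_{Z(J)}\leq \|e^{i(t-t_0)\Delta^2}w(t_0)\|_{Z(J)}+C\bigl(\|u\|_{Z(J)}^{p-1}+\|\tilde u\|_{Z(J)}^{p-1}\bigr)\|w\|_{Z(J)}+C\|e\|_{L^{q'}(J;L^{r'})}.
\end{equation*}

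Using $\|\tilde u\|_{Z(I)}\leq A$, I would partition $I=\bigcup_{j=1}^{J_0}I_j$ with $I_j=[t_{j-1},t_j]$ and $\|\tilde u\|_{Z(I_j)}\leq\delta$ on each piece, where $\delta=\delta(N,p)$ is fixed small enough that $C(2\delta)^{p-1}\leq 1/2$. The number $J_0$ of pieces is bounded by a power of $A/\delta$, hence depends only on $A$. On each $I_j$ I would run a continuity argument: on the set where $\|w\|_{Z([t_{j-1},\cdot])}\leq\delta$ (which contains a neighborhood of $t_{j-1}$ whenever $\eta_{j-1}+\epsilon$ is sufficiently small), one has $\|u\|_{Z(I_j)}\leq 2\delta$, the nonlinear factor in the display above is $\leq 1/2$, and absorption produces
\begin{equation*}
\|w\|_{Z(I_j)}\leq 2\eta_{j-1}+2C\epsilon,\qquad \eta_{j-1}:=\|e^{i(t-t_{j-1})\Delta^2}w(t_{j-1})\|_{Z(I_j)}.
\end{equation*}

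To pass from $I_j$ to $I_{j+1}$, I would control $\eta_j$ via the identity, valid for $t\geq t_j$,
\begin{equation*}
e^{i(t-t_j)\Delta^2}w(t_j)=e^{i(t-t_{j-1})\Delta^2}w(t_{j-1})-i\int_{t_{j-1}}^{t_j} e^{i(t-s)\Delta^2}\bigl[(|u|^{p-1}u-|\tilde u|^{p-1}\tilde u)(s)-e(s)\bigr]\,ds,
\end{equation*}
taking the $Z([t_j,T_2])$-norm and applying \eqref{inhomo} once more to obtain $\eta_j\leq \eta_{j-1}+C\delta^{p-1}\|w\|_{Z(I_j)}+C\epsilon\leq K\eta_{j-1}+C\epsilon$ for some absolute $K$. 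Iterating $J_0=J_0(A)$ times from $\eta_0=\|e^{i(t-t_0)\Delta^2}w(t_0)\|_{Z(I)}\leq\epsilon$ yields $\eta_j\leq c(A)\epsilon$ at every step, and summing the single-interval bounds delivers the desired $\|u-\tilde u\|_{Z(I)}\leq c(A)\epsilon$.

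The main technical obstacle I anticipate is closing the continuity/bootstrap rigorously on each $I_j$: since $\|u\|_{Z(I_j)}$ enters the estimate but is not known a priori, $\epsilon_0=\epsilon_0(A,N,p)$ must be chosen so that the amplified bound $K^{J_0}\epsilon$ never exceeds the bootstrap threshold $\delta$, which threads the exponential growth through the $A$-dependent number of pieces. Once the $Z(I)$-control of $u$ is secured, Proposition \ref{sd} applied on each subinterval upgrades the solution to $u\in C(I,H^2)$, completing the conclusion.
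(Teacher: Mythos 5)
Your proposal is correct and follows essentially the same route as the paper's proof: partition $I$ into $O_A(1)$ subintervals on which $\|\tilde u\|_Z$ is small, apply the inhomogeneous Strichartz estimate \eqref{in-stri} to the Duhamel formula for $w=u-\tilde u$ on each piece, use the identity propagating $e^{i(t-t_j)\Delta^2}w(t_j)$ forward to $e^{i(t-t_{j+1})\Delta^2}w(t_{j+1})$, iterate the resulting recursion (picking up a geometric factor per step), and choose $\epsilon_0$ small enough that the exponentially amplified bound still closes. Your explicit invocation of a continuity/bootstrap step to justify $\|u\|_{Z(I_j)}\leq 2\delta$ a priori is a slightly more careful rendering of the absorption step that the paper handles via the condition \eqref{condition}, but the underlying argument is the same.
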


\begin{proof}
Let $w$ be defined by $u=\tilde u+w$. Then  $w$ solves the equation
\begin{align}\label{w}
i\partial_t w+\Delta^2w-|w+\tilde u|^{p-1}(w+\tilde u)+|\tilde u|^{p-1}\tilde u+e=0.
\end{align}
For any $t_0\in I$,  $I=(T_1,t_0]\cup[t_0,T_2)$.  We need only
consider on $I_+=[t_0,T_2)$, since the case on $I_-=(T_1,t_0]$ can be
considered similarly.  Since $\|\tilde{u}\|_{Z(I)}\leq A$, we can
partition $[t_0,T_2)$ into $N=N(A)$ intervals $I_j =[t_j,t_{j+1}]$
such that for each $j$, the quantity
$\|\tilde{u}\|_{Z(I_j)}\leq\delta$ is suitably small with $\delta$
to be chosen later. The integral equation of $w$ with initial time
$t_j$ is
\begin{align}\label{w2}
  w(t)=e^{i(t-t_j)\Delta^2}w(t_j)-i\int_{t_j}^te^{i(t-s)\Delta^2}[|w+\tilde u|^{p-1}(w+\tilde u)-|\tilde u|^{p-1}\tilde u-e](s)ds.
\end{align}
Using the inhomogeneous Strichartz estimates \eqref{in-stri} on $I_j$, we obtain
\begin{align*}
\|w\|_{Z(I_j)}&\leq\|e^{i(t-t_j)\Delta^2}w(t_j)\|_{Z(I_j)}
+c\||w+\tilde u|^{p-1}(w+\tilde u)+|\tilde u|^{p-1}\tilde u\|_{Z'(I_j)}+\|e\|_{L^{q'}(I;L^{r'})}\\
&\leq\|e^{i(t-t_j)\Delta^2}w(t_j)\|_{Z(I_j)}
+c\| \tilde u\|^{p-1}_{Z (I_j)}\|w\|_{Z (I_j)}+c\|w\|_{Z (I_j)}^p+\|e\|_{L^{q'}(I;L^{r'})}\\
&\leq\|e^{i(t-t_j)\Delta^2}w(t_j)\|_{Z(I_j)} +c\delta^{p-1}\|w\|_{Z
(I_j)}+c\|w\|_{Z (I_j)}^p+c\epsilon_0.
\end{align*}
If
\begin{align}\label{condition}
\delta\leq\left(\frac1{4c}\right)^{\frac1{p-1}},\ \ \ \
\|e^{i(t-t_j)\Delta^2}w(t_j)\|_{Z(I_j)}+c\epsilon_0\leq\frac12\left(\frac1{4c}\right)^{\frac1{p-1}},
\end{align}
then 
\begin{align*}
\|w\|_{Z(I_j)}\leq2\|e^{i(t-t_j)\Delta^2}w(t_j)\|_{Z(I_j)}+c\epsilon_0.
\end{align*}
Now take $t=t_{j+1}$ in \eqref{w2}, and apply $e^{i(t-t_{j+1})\Delta^2}$
to both sides to obtain
\begin{align}\label{w2}
 e^{i(t-t_{j+1})\Delta^2} w(t_{j+1})=e^{i(t-t_j)\Delta^2}w(t_j)-i\int_{t_j}^{t_{j+1}}e^{i(t-s)\Delta^2}[|w+\tilde u|^{p-1}(w+\tilde u)-|\tilde u|^{p-1}\tilde u-e](s)ds.
\end{align}
Since the Duhamel integral is confined to $I_j$, using the inhomogeneous Strichart'z estimates \eqref{in-stri}
and following a similar argument as above, we obtain that
\begin{align*}
\| e^{i(t-t_{j+1})\Delta^2}
w(t_{j+1})\|_{Z(I_+)}&\leq\|e^{i(t-t_j)\Delta^2}w(t_j)\|_{Z(I_+)}
+c\delta^{p-1}\|w\|_{Z (I_j)}+c\|w\|_{Z (I_j)}^p+c\epsilon_0\\
&\leq2\|e^{i(t-t_j)\Delta^2}w(t_j)\|_{Z(I_+)}+c\epsilon_0.
\end{align*}
Iterating beginning with $j=0$, we obtain
\begin{align*}
\| e^{i(t-t_{j})\Delta^2}
w(t_{j})\|_{Z(I_+)}\leq2^{j}\|e^{i(t-t_0)\Delta^2}w(t_0)\|_{Z(I_+
)}+(2^{j}-1)c\epsilon_0 \leq2^{j+2}c\epsilon_0.
\end{align*}
To accommodate the conditions \eqref{condition} for all intervals $I_j$ with $0\leq j\leq N-1$, we require
\begin{align}\label{condition'}
2^{N+2}c\epsilon_0\leq\left(\frac1{4c}\right)^{\frac1{p-1}}.
\end{align}
Finally,$$\|w\|_{Z(I_+)}\leq
\sum_{j=0}^{N-1}2^{j+2}c\epsilon_0+cN\epsilon_0\leq
c(N)\epsilon_0,$$ which implies $\|w\|_{Z(I_+)}\leq c(A)\epsilon_0$
since $N=N(A)$, concluding the proof.

\end{proof}

\section{Variational Structure}
Following the idea from  \cite{W1}, we study  the
variational structure of the ground state of the   elliptic
equation
\eqref{Q}
by seeking   the best constant of the Gagliardo-Nirenberg inequality
\begin{align}\label{gn}
\|u\|_{p+1}^{p+1}\leq C_{GN} \|u\|_2^{p+1-\frac{N(p-1)}{4}}\|\Delta
u\|_{2}^{\frac{N(p-1)}{4}}.
\end{align}

Formally,  if $W$ is the minimizer of the variational problem
\begin{align}\label{J}
J=\inf\{J(u):u\in H^2\} \ \ \ \ with\ \  \ \ J(u)=\frac{\|u\|_2^{p+1-\frac{N(p-1)}{4}}\|\Delta u\|_{2}^{\frac{N(p-1)}{4}}}{\| u\|_{p+1}^{p+1}},
\end{align}
then we compute straightforward to get that
 $W$ satisfies the equation
\begin{align*}
&\|W\|_2^{p-1-\frac{N(p-1)}{4}}\|\Delta W\|_2^{\frac{N(p-1)}{4}}(1+\frac{(4-N)(p-1)}{8})W\\
&+\|W\|_2^{p+1-\frac{N(p-1)}{4}}\|\Delta W\|_2^{\frac{N(p-1)}{4}-2}\frac{N(p-1)}{8}\Delta^2 W-J^{\frac{p+1}2}|W|^{p-1}W=0.
\end{align*}
If we  set $W(x)=aQ(x)$, where $a,b$ satisfies
$$\frac{N(p-1)}8b^4(2-s_c)(1+\frac{(4-N)(p-1)}{8})^{-1}=1$$ and
$$J^{\frac{p+1}2}a^{p-1}(2-s_c)(1+\frac{(4-N)(p-1)}{8})^{-1}=1,$$ then
$Q(x)=a^{-1}W(b^{-1}x)$ solves the equation \eqref{Q}, and also
attains the variational problem \eqref{J} with
$J=J(Q)=\frac{\|Q\|_2^{p+1-\frac{N(p-1)}{4}}\|\Delta
Q\|_{2}^{\frac{N(p-1)}{4}}}{\|Q\|_{p+1}^{p+1}}$ (noting that
$J(Q)=J(W)$ is invariant under the scaling $Q(x)=a^{-1}W(b^{-1}x)$).

The existence of the ground state solution of \eqref{Q} can be shown
by the same method as used in \cite{zhu,lenzmann}, so we omit here. Moreover, the
Pohozeav identity
$$(2-\frac N2)\|\Delta Q\|_2^2-(2-s_c)\frac N2\|Q\|_2^2+\frac{N}{p+1}\|Q\|_{p+1}^{p+1}=0,$$
which can be obtained by multiplying the equation \eqref{Q} by
$x\cdot\nabla Q$, combined with the identity $\|\Delta
Q\|_2^2+(2-s_c)\|Q\|_2^2-\|Q\|_{p+1}^{p+1}=0,$ obtained by
multiplying the equation \eqref{Q} by $Q$, implies  immediately that
\begin{align}\label{id}
\|\Delta Q\|_2^2=\frac{N(p-1)}{4(p+1)}\|Q\|_{p+1}^{p+1},\ \
\|Q\|_2^2=\frac{p-1}{2(p+1)}\|Q\|_{p+1}^{p+1},\ \
E(Q)=\frac{N(p-1)-8}{8(p+1)}\|Q\|_{p+1}^{p+1}.
\end{align}
Then we have \begin{align}\label{cgn} C_{GN}=\frac
1J=\frac{4(p+1)}{N(p-1)}\frac1{\|Q\|_2^{p+1-\frac{N(p-1)}{4}}\|\Delta
Q\|_{2}^{\frac{N(p-1)}{4}-2} }.\end{align}

\section{Global versus blow-up and dichotomy}

\begin{theorem}\label{dichotomy}
 Let $u_{0}\in H^{2}$ and  $I=(T_{-},T_{+})$  be the
maximal time interval of existence of ~$u(t)$~ solving ~\eqref{1.1}.~
Suppose that
\begin{equation}\label{2.1'}
M(u)^{\frac{2-s_c}{s_c}}E(u)<M(Q)^{\frac{2-s_c}{s_c}}E(Q).
\end{equation}
If ~\eqref{2.1'}~holds and
\begin{equation}\label{2.2'}
\|u_{0}\|^{\frac{2-s_c}{s_c}}_{2}\|\Delta u_{0}\|_{2}<\|Q\|^{\frac{2-s_c}{s_c}}_{2}\|\Delta Q\|_{2},
\end{equation}
then ~$I=(-\infty,+\infty)$,~ i.e., the solution exists globally in
time, and for all time ~$t\in \mathbb{R},$~
\begin{equation}\label{2.3'}
\|u(t)\|^{\frac{2-s_c}{s_c}}_{2}\|\Delta u(t)\|_{2}<\|Q\|^{\frac{2-s_c}{s_c}}_{2}\|\Delta Q\|_{2}.
\end{equation}
If ~\eqref{2.1'}~holds and
\begin{equation}\label{2.4'}
\|u_{0}\|^{\frac{2-s_c}{s_c}}_{2}\|\Delta u_{0}\|_{2}>\|Q\|^{\frac{2-s_c}{s_c}}_{2}\|\Delta Q\|_{2},
\end{equation}
then for~$t\in I,$~
\begin{equation}\label{2.5'}
\|u(t)\|^{\frac{2-s_c}{s_c}}_{2}\|\Delta u(t)\|_{2}>\|Q\|^{\frac{2-s_c}{s_c}}_{2}\|\Delta Q\|_{2}.
\end{equation}

\end{theorem}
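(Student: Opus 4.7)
My plan is to run the standard Kenig--Merle dichotomy argument adapted to the biharmonic setting, using the sharp Gagliardo--Nirenberg inequality \eqref{gn} from Section~3 as the core ingredient. The scaling-invariant quantities $M(u)^{(2-s_c)/s_c}E(u)$ and $\|u\|_{2}^{(2-s_c)/s_c}\|\Delta u\|_{2}$ match naturally with the exponents in \eqref{gn}, which is precisely what makes the argument go through.

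First, I would plug \eqref{gn} with sharp constant $C_{GN}$ into the definition of $E(u(t))$ to obtain
\begin{equation*}
E(u(t)) \;\geq\; \tfrac{1}{2}\|\Delta u(t)\|_{2}^{2} \;-\; \tfrac{C_{GN}}{p+1}\|u(t)\|_{2}^{p+1-\frac{N(p-1)}{4}}\|\Delta u(t)\|_{2}^{\frac{N(p-1)}{4}}.
\end{equation*}
Multiplying by $M(u)^{(2-s_c)/s_c}=\|u(t)\|_{2}^{2(2-s_c)/s_c}$, invoking mass and energy conservation, and introducing the scale-invariant function $g(t):=\|u(t)\|_{2}^{(2-s_c)/s_c}\|\Delta u(t)\|_{2}$, I arrive at
\begin{equation*}
M(u_{0})^{\frac{2-s_c}{s_c}}E(u_{0}) \;\geq\; F(g(t)), \qquad F(y):=\tfrac{1}{2}y^{2}-\tfrac{C_{GN}}{p+1}\,y^{\frac{N(p-1)}{4}}.
\end{equation*}
The exponent matching here is forced by the scale invariance of both sides, and the condition $p>1+\tfrac{8}{N}$ (i.e. $s_c>0$) gives $\tfrac{N(p-1)}{4}>2$, so $F$ vanishes at $0$, rises to a single positive maximum, then decreases to $-\infty$.

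The second step is to identify this maximum. A short computation using $C_{GN}=1/J$ from \eqref{cgn} together with the Pohozaev-type identities \eqref{id} for $Q$ shows that the unique positive critical point of $F$ is exactly $y_{Q}:=\|Q\|_{2}^{(2-s_c)/s_c}\|\Delta Q\|_{2}$, and that
\begin{equation*}
F(y_{Q}) \;=\; M(Q)^{\frac{2-s_c}{s_c}}E(Q).
\end{equation*}
Hence the assumption \eqref{2.1'} reads $F(g(t))\leq M(u_{0})^{(2-s_c)/s_c}E(u_{0})<F(y_{Q})$ for every $t\in I$, which means $g(t)$ stays in the sublevel set $\{F<F(y_{Q})\}=[0,y_{1})\cup(y_{2},\infty)$ with $0<y_{1}<y_{Q}<y_{2}$.

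Since $u\in C(I,H^{2})$ by Proposition~\ref{posedness} and $\|u(t)\|_{2}$ is conserved, $g$ is continuous on $I$, so it cannot jump between the two connected components of the sublevel set. Under \eqref{2.2'}, $g(0)<y_{Q}$ puts $g(t)\in[0,y_{1})$ for all $t\in I$, giving \eqref{2.3'}; under \eqref{2.4'}, $g(0)>y_{Q}$ traps $g(t)\in(y_{2},\infty)$, giving \eqref{2.5'}. In the first case \eqref{2.3'} yields a uniform bound on $\|\Delta u(t)\|_{2}$, hence on $\|u(t)\|_{H^{2}}$ by mass conservation, so the blow-up alternative in Proposition~\ref{posedness} forces $I=\mathbb{R}$.

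The only slightly delicate part is the identification of $F(y_{Q})$ with $M(Q)^{(2-s_c)/s_c}E(Q)$ and of $y_{Q}$ with the maximizer of $F$: this is a bookkeeping exercise with the exponent $\tfrac{N(p-1)}{4}$ and the identities \eqref{id}, but it is purely algebraic and presents no real difficulty. The rest is a continuity-of-$g(t)$ argument in a one-dimensional sublevel set picture, which is completely standard.
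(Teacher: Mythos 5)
Your proposal is correct and follows essentially the same route as the paper's proof: substitute the sharp Gagliardo--Nirenberg inequality into the energy, multiply by the scale-invariant mass factor, define the comparison function (the paper's $f$, your $F$), identify its unique positive critical point as $\|Q\|_{2}^{(2-s_c)/s_c}\|\Delta Q\|_{2}$ with critical value $M(Q)^{(2-s_c)/s_c}E(Q)$ via \eqref{cgn} and \eqref{id}, and trap $g(t)$ on one side by energy conservation plus continuity. Your explicit sublevel-set decomposition $[0,y_1)\cup(y_2,\infty)$ is a slightly more careful phrasing of the paper's continuity argument, but the underlying mechanism is identical.
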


\begin{proof}
Multiplying the definition of energy by $M(u)^{\frac{2-s_c}{s_c}}$ and using \eqref{gn},
we have
\begin{align*}
M(u)^{\frac{2-s_c}{s_c}}E(u)=&\frac12 \|u(t)\|^{\frac{2(2-s_c)}{s_c}}_{2}\|\Delta u(t)\|^2_{2}
-\frac1{p+1}\|u\|_{p+1}^{p+1}\|u\|_2^{\frac{2(2-s_c)}{s_c}}\\
\geq& \frac12 (\|u(t)\|^{\frac{2-s_c}{s_c}}_{2}\|\Delta u(t)\|_{2})^2
-\frac{C_{GN}}{p+1}(\|u(t)\|^{\frac{2-s_c}{s_c}}_{2}\|\Delta u(t)\|_{2})^{\frac{N(p-1)}{4}}.
\end{align*}
Define $f(x)=\frac12x^2-\frac1{p+1}C_{GN}x^{\frac{N(p+1)}{4}}$. Then
$f'(x)=x\left(1-C_{GN}\frac{N(p-1)}{4(p+1)}x^{\frac{N(p+1)-8}{4}}\right)$, and
thus, $f'(x)=0$ when $x_0=0$ and
$x_1=\|Q\|^{\frac{2-s_c}{s_c}}_{2}\|\Delta Q\|_{2}$. The graph of
$f$ has a local minimum at $x_0$ and a local maximum at $x_1$. The
condition \eqref{2.1'} and \eqref{id} imply that
$M(u_0)^{\frac{2-s_c}{s_c}}E(u_0)<f(x_1)=M(Q)^{\frac{2-s_c}{s_c}}E(Q)$.
This combined with energy conservation gives that
\begin{align}\label{1}
f(\|u(t)\|^{\frac{2-s_c}{s_c}}_{2}\|\Delta u(t)\|_{2})\leq M(u(t))^{\frac{2-s_c}{s_c}}E(u(t))=M(u_0)^{\frac{2-s_c}{s_c}}E(u_0)
<f(x_1).
\end{align}
If initially $\|u_{0}\|^{\frac{2-s_c}{s_c}}_{2}\|\Delta
u_{0}\|_{2}<x_1$, then by  \eqref{1} and the continuity of $\|\Delta
u(t)\|_2$ in $t$, we have \eqref{2.3'} for all time $t\in I$. In
particular, the $H^2$-norm of the solution $u$ is bounded, which, by Proposition \ref{posedness},
proves the global existence in this case.
If initially $\|u_{0}\|^{\frac{2-s_c}{s_c}}_{2}\|\Delta u_{0}\|_{2}>x_1$, then by  \eqref{1}
and the continuity of $\|\Delta u(t)\|_2$ in $t$, we have \eqref{2.5'} for all time $t\in I$.

From the argument above, we
  can refine this analysis to obtain the following.  If the
condition \eqref{2.2'} holds, then there exists $\delta>0$ such that
$M(u)^{\frac{2-s_c}{s_c}}E(u)<(1-\delta)M(Q)^{\frac{2-s_c}{s_c}}E(Q)$,
and thus there exists $\delta_0=\delta_0(\delta)$ such that
$\|u(t)\|^{\frac{2-s_c}{s_c}}_{2}\|\Delta u(t)\|_{2}<
(1-\delta_0)\|Q\|^{\frac{2-s_c}{s_c}}_{2}\|\Delta Q\|_{2}.$
\end{proof}

The next two lemmas provide some additional estimates  under the
hypotheses \eqref{2.1'} and \eqref{2.2'} in Theorem \ref{dichotomy}.
These lemmas
 will be needed in the proof of Theorem \ref{th1} through a virial-type estimate, which will be established  in the last
two sections.

\begin{lemma}\label{lower bound}
Let $u_0\in H^2$ satisfy \eqref{2.1'} and \eqref{2.2'}. Furthermore,
take $\delta>0$ such that
$M(u_0)^{\frac{2-s_c}{s_c}}E(u_0)<(1-\delta)M(Q)^{\frac{2-s_c}{s_c}}E(Q)$.
If $u$ is a solution of problem \eqref{1.1} with initial data $u_0$,
then there exists $C_\delta>0$ such that for all $t\in\mathbb R$,
\begin{align}\label{4.7}
\|\Delta u\|_2^2-\frac{N(p-1)}{4(p+1)}\|u\|_{p+1}^{p+1}\geq
C_\delta\|\Delta u\|_2^2.
\end{align}
\end{lemma}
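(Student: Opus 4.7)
The approach is to combine the sharp Gagliardo--Nirenberg inequality \eqref{gn} with the refinement of Theorem~\ref{dichotomy} noted at the end of its proof. First, I would apply \eqref{gn} to estimate
\[
\frac{N(p-1)}{4(p+1)}\|u\|_{p+1}^{p+1}\leq \frac{N(p-1)}{4(p+1)}C_{GN}\|u\|_{2}^{p+1-\frac{N(p-1)}{4}}\|\Delta u\|_{2}^{\frac{N(p-1)}{4}},
\]
and then substitute the explicit value of $C_{GN}$ supplied by \eqref{cgn}. After factoring $\|\Delta u\|_{2}^{2}$ out of the resulting bound, the lemma reduces to showing that the ratio
\[
R(t):=\frac{\|u(t)\|_{2}^{p+1-\frac{N(p-1)}{4}}\|\Delta u(t)\|_{2}^{\frac{N(p-1)}{4}-2}}{\|Q\|_{2}^{p+1-\frac{N(p-1)}{4}}\|\Delta Q\|_{2}^{\frac{N(p-1)}{4}-2}}
\]
is bounded strictly below $1$ uniformly in $t$.

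The key algebraic observation I would use is that $R(t)$ is in fact a power of the scaling-invariant ratio already controlled by Theorem~\ref{dichotomy}. A short computation using the definition $s_{c}=N/2-4/(p-1)$ gives $N(p-1)/4-2=(p-1)s_{c}/2$ and $p+1-N(p-1)/4=(p-1)(2-s_{c})/2$, hence the exponent identity $(p+1-N(p-1)/4)/(N(p-1)/4-2)=(2-s_{c})/s_{c}$, from which
\[
R(t)=\left(\frac{\|u(t)\|_{2}^{\frac{2-s_{c}}{s_{c}}}\|\Delta u(t)\|_{2}}{\|Q\|_{2}^{\frac{2-s_{c}}{s_{c}}}\|\Delta Q\|_{2}}\right)^{\frac{N(p-1)}{4}-2}.
\]
Since we are in the $L^{2}$-supercritical range $p>1+8/N$, the exponent $N(p-1)/4-2$ is strictly positive, so strict inequalities inside the parentheses are preserved upon exponentiation.

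At this point I would invoke the refinement established in the last paragraph of the proof of Theorem~\ref{dichotomy}: the strict gap $M(u_{0})^{(2-s_{c})/s_{c}}E(u_{0})<(1-\delta)M(Q)^{(2-s_{c})/s_{c}}E(Q)$ produces $\delta_{0}=\delta_{0}(\delta)>0$ with
\[
\|u(t)\|_{2}^{\frac{2-s_{c}}{s_{c}}}\|\Delta u(t)\|_{2}\leq (1-\delta_{0})\|Q\|_{2}^{\frac{2-s_{c}}{s_{c}}}\|\Delta Q\|_{2}
\]
for every $t$ in the lifespan. This yields $R(t)\leq(1-\delta_{0})^{N(p-1)/4-2}$, and \eqref{4.7} follows with $C_{\delta}:=1-(1-\delta_{0})^{N(p-1)/4-2}>0$. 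The only genuinely substantive step is the exponent identity reducing $R(t)$ to a scaling-invariant power; everything else is substitution and bookkeeping, and the positivity $N(p-1)/4-2>0$ needed to preserve strictness is nothing but a restatement of the $L^{2}$-supercritical hypothesis $p>1+8/N$. For this reason I do not anticipate any real analytic obstacle; the hypotheses \eqref{2.1'} and \eqref{2.2'} enter solely through the uniform bound furnished by Theorem~\ref{dichotomy}.
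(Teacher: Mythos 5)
Your proposal is correct and follows essentially the same route as the paper: both apply the sharp Gagliardo--Nirenberg inequality with the explicit constant from \eqref{cgn}, factor out $\|\Delta u\|_2^2$, and invoke the uniform bound $\|u(t)\|_2^{(2-s_c)/s_c}\|\Delta u(t)\|_2\leq(1-\delta_0)\|Q\|_2^{(2-s_c)/s_c}\|\Delta Q\|_2$ obtained as the refinement at the end of Theorem~\ref{dichotomy}. The paper packages this as $g(y)=y^2-y^{N(p-1)/4}\geq C_\delta y^2$ on $[0,1-\delta_0]$; you make explicit the exponent identity $\bigl(p+1-\tfrac{N(p-1)}{4}\bigr)/\bigl(\tfrac{N(p-1)}{4}-2\bigr)=\tfrac{2-s_c}{s_c}$ that the paper leaves implicit, arriving at the identical $C_\delta=1-(1-\delta_0)^{N(p-1)/4-2}$.
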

\begin{proof}
By the analysis in the proof of Theorem \ref{dichotomy}, there exists
$\delta_0=\delta_0(\delta)>0$ such that for all $t\in\mathbb R$,

\begin{align}\label{low}
\|u(t)\|^{\frac{2-s_c}{s_c}}_{2}\|\Delta u(t)\|_{2}<
(1-\delta_0)\|Q\|^{\frac{2-s_c}{s_c}}_{2}\|\Delta Q\|_{2}.
\end{align} Let
$$h(t)=\frac1{\|Q\|^{\frac{2(2-s_c)}{s_c}}_{2}\|\Delta Q\|^2_{2}}(
\|u(t)\|^{\frac{2(2-s_c)}{s_c}}_{2}\|\Delta
u(t)\|^2_{2}-\frac{N(p-1)}{4(p+1)}\|u\|_{p+1}^{p+1}\|u(t)\|^{\frac{2(2-s_c)}{s_c}}_{2})$$
and set $g(y)=y^2-y^{\frac{N(p-1)}4}$. By Gagliardo-Nirenberg
estimate \eqref{gn} with sharp constant $C_{GN}$ \eqref{cgn}, we can obtain
$$h(t)\geq g\left(\frac{\|u(t)\|^{\frac{2-s_c}{s_c}}_{2}\|\Delta u(t)\|_{2}}{\|Q\|^{\frac{2-s_c}{s_c}}_{2}\|\Delta Q\|_{2}}\right).$$
By \eqref{low}, we restrict our attention to $0\leq y\leq1-\delta_0$.
The elementary argument gives a constant $C_\delta$ such that $g(y)\geq
C_\delta y^2$ if $0\leq y\leq1-\delta_0$. This indeed implies \eqref{4.7}.
\end{proof}

\begin{lemma}\label{4.5}
(Comparability of the kinetic energy and the total energy)
Let $u_0\in H^2$ satisfy \eqref{2.1'} and \eqref{2.2'}. Then
$$\frac{N(p-1)-8}{2N(p-1)}\|\Delta u(t)\|_{2}^2\leq E(u)\leq\frac{1}{2}\|\Delta u(t)\|_{2}^2.$$
\end{lemma}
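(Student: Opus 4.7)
The two inequalities have very different characters, so the plan is to treat them separately. The upper bound $E(u)\leq\frac{1}{2}\|\Delta u(t)\|_2^2$ is completely trivial: the definition $E(u)=\frac{1}{2}\|\Delta u\|_2^2-\frac{1}{p+1}\|u\|_{p+1}^{p+1}$ combined with the non-negativity of $\|u\|_{p+1}^{p+1}$ gives it for free, no hypotheses needed.

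For the lower bound, my approach is to use Lemma \ref{lower bound} (the preceding lemma) to bound the potential-energy term by the kinetic-energy term, and then substitute back into the definition of $E$. Since the hypothesis \eqref{2.1'} is strict, I can choose $\delta>0$ so that Lemma \ref{lower bound} applies, and discarding the non-negative right-hand-side contribution $C_\delta\|\Delta u\|_2^2$ yields
$$\tfrac{N(p-1)}{4(p+1)}\|u(t)\|_{p+1}^{p+1}\leq\|\Delta u(t)\|_2^2,$$
equivalently $\tfrac{1}{p+1}\|u(t)\|_{p+1}^{p+1}\leq\tfrac{4}{N(p-1)}\|\Delta u(t)\|_2^2$. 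Plugging this into the definition of $E$ and collecting coefficients gives
$$E(u)\geq\left(\tfrac{1}{2}-\tfrac{4}{N(p-1)}\right)\|\Delta u(t)\|_2^2=\tfrac{N(p-1)-8}{2N(p-1)}\|\Delta u(t)\|_2^2,$$
which is exactly the claimed lower bound, and positive because we are in the $L^2$-supercritical regime $p>1+\frac{8}{N}$.

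There is essentially no obstacle here: Lemma \ref{lower bound} already does the heavy lifting, since that is where the sharp Gagliardo--Nirenberg inequality \eqref{cgn} and the subcritical trapping argument from Theorem \ref{dichotomy} were deployed. Lemma \ref{4.5} is then an almost immediate algebraic corollary, and the proof will amount to little more than the rearrangement displayed above.
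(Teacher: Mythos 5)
Your proof is correct, and it takes a mildly different route from the paper's. The paper's own proof of Lemma~\ref{4.5} re-applies the sharp Gagliardo--Nirenberg inequality \eqref{gn} together with \eqref{cgn} and the trapping bound \eqref{2.3'} to rewrite the potential term as $\frac{8}{N(p-1)}$ times a ratio raised to the power $\frac{N(p-1)-8}{4}$, and then uses that the ratio is below $1$ --- in effect, it repeats the very computation that proves Lemma~\ref{lower bound} rather than citing that lemma. You instead treat Lemma~\ref{lower bound} as a black box, drop its $C_\delta\|\Delta u\|_2^2$ term, and substitute the resulting bound $\frac{1}{p+1}\|u\|_{p+1}^{p+1}\leq\frac{4}{N(p-1)}\|\Delta u\|_2^2$ back into the definition of $E$. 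The logic is sound: the strict inequality in \eqref{2.1'} does furnish an admissible $\delta>0$ for Lemma~\ref{lower bound}, and discarding the $\delta$-dependent improvement is exactly what produces the $\delta$-independent constant $\frac{N(p-1)-8}{2N(p-1)}$. Both proofs rest on the same Gagliardo--Nirenberg-plus-trapping mechanism; yours is the more economical and modular phrasing, whereas the paper's is self-contained at the cost of redundancy with the preceding lemma.
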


\begin{proof}
The expression of $E(u)$ gives the second inequality immediately.
The first one can be obtained from
\begin{align*}
& \frac{1}{2}\|\Delta u\|_{2}^2-\frac{1}{p+1}\|u\|_{p+1}^{p+1}\geq\frac{1}{2}\|\Delta u\|_{2}^2
(1-\frac{2C_{GN}}{p+1}\|\Delta u\|^{\frac{N(p+1)}4-2}_{2}\|  u\|^{p+1-\frac{N(p+1)}4}_{2})\\
=&\frac{1}{2}\|\Delta u\|_{2}^2
\left(1-\frac{8}{N(p-1)}\left(\frac{\|u(t)\|^{\frac{2-s_c}{s_c}}_{2}\|\Delta
u(t)\|_{2}} {\|Q\|^{\frac{2-s_c}{s_c}}_{2}\|\Delta
Q\|_{2}}\right)^{\frac{N(p-1)-8}{4}}\right)
\geq\frac{N(p-1)-8}{2N(p-1)}\|\Delta u\|_{L^2}^2,
\end{align*}
where we have used \eqref{gn} and \eqref{id}.

\end{proof}

To establish the scattering theory,
  we need the following result.

\begin{proposition}\label{wave operator}
(Existence of wave operators) Suppose $ \psi^+\in H^2$ and
\begin{equation}\label{4.11}
\frac{1}{2}\|\psi^+\|^{\frac{2(2-s_c)}{s_c}}\|\Delta\psi^+\|_2^2<E(Q)M(Q)^{\frac{2-s_c}{s_c}}.
\end{equation}
Then there exists $v_0\in H^2$ such that the solution $v$ of
\eqref{1.1} with initial data $v_0$ satisfies
$$\|\Delta v(t)\|_2\|v_0\|^{\frac{2-s_c}{s_c}}_2<\|\Delta Q\|_2\|Q\|^{\frac{2-s_c}{s_c}}_2,\ \
M(v)=\|\psi^+\|_2^2,\ \  E(v)=\frac{1}{2}\|\Delta\psi^+\|_2^2,$$ and
 $\lim_{t\rightarrow+\infty}\|v(t)-e^{it\Delta}\psi^+\|_{H^2}=0.$
Moreover, if
$\|e^{it\Delta}\psi^+\|_{Z([0,\infty))}\leq\delta_{sd},$ then
$$\|v\|_{Z([0,\infty))}\leq c\|e^{it\Delta}\psi^+\|_{Z([0,\infty))},\ \
         \|D^{s_c}v\|_{2}\leq c\|\psi^+\|_{H^2}.$$
         A similar result holds for the case $t\rightarrow-\infty$.
\end{proposition}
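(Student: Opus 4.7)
The plan is to construct $v$ on a forward tail $[T,\infty)$ by a final-state fixed-point argument paralleling Proposition \ref{sd}, and then extend backward to all of $\mathbb R$ via the dichotomy of Theorem \ref{dichotomy}. Since $\psi^+\in H^2$, the Strichartz estimate \eqref{sstri} gives $e^{it\Delta^2}\psi^+\in Z(\mathbb R)$, so by monotone convergence $\|e^{it\Delta^2}\psi^+\|_{Z([T,\infty))}\to 0$ as $T\to\infty$. Fix $T$ so large that this norm is below $\delta_{sd}$, and on the analogue of the set $M_{M,\delta}$ (with $M=c\|\psi^+\|_{H^2}$ and $\delta$ slightly larger than $\|e^{it\Delta^2}\psi^+\|_{Z([T,\infty))}$) run the contraction for
\begin{align*}
\Phi(v)(t)=e^{it\Delta^2}\psi^+ - i\int_t^\infty e^{i(t-s)\Delta^2}|v|^{p-1}v(s)\,ds.
\end{align*}
The estimates \eqref{319} and \eqref{in-stri} yield exactly the closing inequalities used in the proof of Proposition \ref{sd}, producing a unique solution $v\in C([T,\infty),H^2)$ of \eqref{1.1}. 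Applying \eqref{319} to the difference $v-e^{it\Delta^2}\psi^+$ on the subinterval $[t,\infty)$ then gives $\|v(t)-e^{it\Delta^2}\psi^+\|_{H^2}\lesssim \|v\|_{Z([t,\infty))}^{p-1}\|v\|_{L^\infty([t,\infty);H^2)}\to 0$, proving the scattering at $+\infty$. Under the stronger smallness $\|e^{it\Delta^2}\psi^+\|_{Z([0,\infty))}\leq\delta_{sd}$, the same contraction applied at $T=0$ delivers the quantitative $Z$- and $\dot H^{s_c}$-bounds claimed in the statement.

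Next I would identify the conserved quantities and verify the kinetic bound. Mass conservation together with $\|v(t)\|_2\to\|\psi^+\|_2$ (from $H^2$ scattering) yields $M(v)=\|\psi^+\|_2^2$; similarly $\|\Delta v(t)\|_2\to\|\Delta\psi^+\|_2$. For the nonlinear term, a standard density argument combining the biharmonic dispersive estimate $\|e^{it\Delta^2}f\|_\infty\leq c|t|^{-N/4}\|f\|_1$ with $H^2\hookrightarrow L^{p+1}$ gives $\|e^{it\Delta^2}\psi^+\|_{p+1}\to 0$, and then $\|v(t)\|_{p+1}\to 0$ via the $H^2$ scattering just proved, so energy conservation forces $E(v)=\tfrac12\|\Delta\psi^+\|_2^2$. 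Using the identities \eqref{id} for $Q$, the hypothesis \eqref{4.11} rewrites as
\begin{align*}
\bigl(\|\psi^+\|_2^{(2-s_c)/s_c}\|\Delta\psi^+\|_2\bigr)^2 < \frac{N(p-1)-8}{N(p-1)}\bigl(\|Q\|_2^{(2-s_c)/s_c}\|\Delta Q\|_2\bigr)^2,
\end{align*}
hence in particular $\|\psi^+\|_2^{(2-s_c)/s_c}\|\Delta\psi^+\|_2 < \|Q\|_2^{(2-s_c)/s_c}\|\Delta Q\|_2$. Passing to the limit, $v(t)$ satisfies both \eqref{2.1'} and \eqref{2.2'} for $t$ sufficiently large; applying Theorem \ref{dichotomy} backward in time with initial datum $v(T)$ gives global existence on $(-\infty,T]$ together with the bound \eqref{2.3'} throughout $\mathbb R$, so setting $v_0=v(0)$ completes the proof.

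The main obstacle is making the scattering convergence genuinely hold at the top derivative in $H^2$: this hinges on the key structural feature of \eqref{319}, where the second derivative of the Duhamel term is controlled by only one derivative of the forcing, so the Sobolev--Hölder chain produces a factor $\|v\|_{Z([t,\infty))}^{p-1}$ that tends to zero as $t\to\infty$. Everything else, including the limit passage for mass, energy and the scaling-invariant kinetic quantity, then becomes a routine consequence of the $H^2$-scattering, the embedding $H^2\hookrightarrow L^{p+1}$, and the dispersive decay of the linear biharmonic flow through a standard density argument.
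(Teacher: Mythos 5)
Your proposal is correct and follows essentially the same route as the paper: the final-state contraction on a tail $[T,\infty)$ parallel to Proposition \ref{sd}, the $H^2$-convergence of $v(t)-e^{it\Delta^2}\psi^+$ via the Duhamel/Strichartz estimate with a vanishing tail norm, the limit computation of $M(v)$ and $E(v)$ using decay of $\|e^{it\Delta^2}\psi^+\|_{p+1}$, the rewriting of \eqref{4.11} through the identities \eqref{id} to obtain the gradient threshold for $\psi^+$, and the backward extension to $t=0$ by Theorem \ref{dichotomy}. The only cosmetic difference is which intermediate Strichartz norm you display in the scattering estimate (you write $\|v\|_{Z([t,\infty))}^{p-1}$ where the paper uses $\|v\|_{L^{2(p-1)}([t,\infty);L^{N(p-1)/2})}^{p-1}$ to match \eqref{319}), but both are controlled by the contraction and both vanish as $t\to\infty$, so the argument is unchanged.
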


\begin{proof}
Similar to the proof of the small data scattering theory Proposition \ref{sd}, we can solve the integral equation
\begin{equation}\label{4.12}
v(t)=e^{it\Delta^2}\psi^++i\int_t^\infty
e^{i(t-s)\Delta^2}|v|^{p-1}v(s)ds
\end{equation}
 for $t\geq T$ with $T$ large.

 In fact, there exists some large $T$ such that
$\|e^{it\Delta}\psi^+\|_{Z([T,\infty))}\leq \delta_{sd},$
where $\delta_{sd}$ is defined by Proposition \ref{sd}.
Then, the same arguments as used in Proposition \ref{sd}  give a solution $v\in C([T,\infty),H^2)$ of \eqref{4.12}.
Moreover, we also have
$
\|v\|_{Z([T,\infty))}\leq2\delta_{sd},
$ $\|v\|_{L^{2(p-1)}([T,\infty),L^{\frac{N(p-1)}{2}})}\leq2\delta_{sd}$,
and $\|\Delta v\|_{L^{\infty}([T,\infty);L^2)} \leq c\|\Delta
v_0\|_2$.
Thus from
\begin{align*}
\|\Delta(v-e^{it\Delta^2}\psi^+)\|_{L^{\infty}([T,\infty);L^2)}
 \leq c\|v\|_{L^{2(p-1)}([T,\infty),L^{\frac{N(p-1)}{2}})}^{p-1}
 \|\Delta v\|_{L^\infty([T,\infty),L^2)},
\end{align*}
we get that $$\|\Delta(v-e^{it\Delta^2}\psi^+)\|_{L^{\infty}([T,\infty);L^2)} \rightarrow 0
\ \  as ~T\rightarrow \infty,$$
which implies $v(t)-e^{it\Delta^2}\psi^+\rightarrow0 $ in $H^2$ as $t\rightarrow+\infty$.
Thus $M(v)=\|\psi^{+}\|_{2}^2.$

Since $e^{it\Delta^2}\psi^+\rightarrow0$ in $L^r$ as $t\rightarrow+\infty$ for any $r\in(2,\frac{2N}{N-4}),$ we
get easily that $\|e^{it\Delta^2}\psi^+\|_{p+1}\rightarrow0$. This together with the fact that
 $\|\Delta e^{it\Delta^2}\psi^+\|_2$ is conserved implies
 $$E(v)=\lim_{t\rightarrow\infty}\left(\frac{1}{2}\|\Delta e^{it\Delta^2}\psi^+\|_2^2
 -\frac{1}{p+1}\|e^{it\Delta^2}\psi^+\|_{p+1}^{p+1}\right)=\frac{1}{2}\|\Delta \psi^+\|_2^2.$$
 In view of \eqref{4.11} we immediately obtain $M(v)^{\frac{ 2-s_c }{s_c}}E(v)<E(Q)M(Q)^{\frac{ 2-s_c }{s_c}}.$
  Note that
\begin{align*}
&\lim_{t\rightarrow\infty}\|v(t)\|_2^{\frac{2(2-s_c)}{s_c}}\|\Delta v(t)\|_2^2
=\lim_{t\rightarrow\infty}\|e^{it\Delta}\psi^+\|_2^{\frac{2(2-s_c)}{s_c}}\|\Delta e^{it\Delta}\psi^+\|_2^{\frac{2(2-s_c)}{s_c}}\\
=&\|\psi^+\|_2^{\frac{2(2-s_c)}{s_c}}\|\Delta
\psi^+\|_2^2<2E(Q)M(Q)^{\frac{2-s_c}{s_c}}=\frac{N(p-1)-8}{N(p-1)}\|Q\|_2
^{\frac{2(2-s_c)}{s_c}}\|\Delta Q\|_2^2,
\end{align*}
where we have used \eqref{4.11} and \eqref{id} in the last two steps.
Thus, due to Theorem \ref{dichotomy}, we can evolve $v(t)$ from $T$ back to the initial time $0$,
concluding our proof.

\end{proof}

\section{Existence and  compactness of a critical element}

\begin{definition}
We say that $SC(u_0)$ holds if for $u_0\in H^2$ satisfying
$\|u_{0}\|^{\frac{2-s_c}{s_c}}_{2}\|\Delta
u_{0}\|_{2}<\|Q\|^{\frac{2-s_c}{s_c}}_{2}\|\Delta Q\|_{2}$ and
$E(u_{0})M(u_{0})^{\frac{2-s_c}{s_c}}<E(Q)M(Q)^{\frac{2-s_c}{s_c}}$,
 the corresponding solution $u$  of \eqref{1.1} with the maximal
interval of existence $I=(-\infty,+\infty)$ satisfies
\begin{equation}\label{Sbound}
\|u\|_{Z(\mathbb R)}<\infty.
\end{equation}
\end{definition}

We first claim  that  there exists $\delta>0$ such that if
$E(u)M(u)^{\frac{2-s_c}{s_c}}<\delta$ and
$\|u_{0}\|^{\frac{2-s_c}{s_c}}_{2}\|\Delta
u_{0}\|_{2}<\|Q\|^{\frac{2-s_c}{s_c}}_{2}\|\Delta Q\|_{2},$ then
\eqref{Sbound} holds. In fact, by the definition of norm $\|\cdot\|_{Z(I)}$, the B-Strichartz estimate \eqref{stri}
and Lemma \ref{4.5}, we have
$$\|e^{it\Delta^2}u_0\|_{Z(\mathbb R)}^{\frac2{s_c}}\leq c
\|u_0\|_{\dot{H}^{s_c}}^{\frac2{s_c}}\leq c\|u_{0}\|^{\frac{2(2-s_c)}{s_c}}_{2}\|\Delta
u_{0}\|_{2}^2 \leq
\frac{2N(p-1)c}{N(p-1)-8}E(u)M(u)^{\frac{2-s_c}{s_c}}.$$
So if $E(u)M(u)^{\frac{2-s_c}{s_c}}<\frac{N(p-1)-8}{2N(p-1)c}\delta_{sd}^{\frac2{s_c}}$,
 we get
that  $\|e^{it\Delta^2}u_0\|_{Z(\mathbb R)}\leq\delta_{sd}$.
Then from Proposition \ref{sd},  we get that $SC(u_0)$ holds, and the claim holds
for $\delta=\frac{N(p-1)-8}{2N(p-1)c}\delta_{sd}^{\frac2{s_c}}.$ Now
for each $\delta$, we define the set $S_\delta$ to be the collection
of all such initial data in $H^2$ :
$$S_\delta=\{u_0\in H^2:\ \  E(u)M(u)^{\frac{2-s_c}{s_c}}<\delta \ \  and \ \
\|u_{0}\|^{\frac{2-s_c}{s_c}}_{2}\|\Delta u_{0}\|_{2}<\|Q\|^{\frac{2-s_c}{s_c}}_{2}\|\Delta Q\|_{2} \}.$$
We also define $(M^{\frac{2-s_c}{s_c}}E)_c=\sup\{\delta:\ \ u_0\in S_\delta\Rightarrow SC(u_0)\ \  holds \}.$
If $(M^{\frac{2-s_c}{s_c}}E)_c=M(Q)^{\frac{2-s_c}{s_c}}E(Q)$, then we are done. Thus we assume now
\begin{equation}\label{me}
(M^{\frac{2-s_c}{s_c}}E)_c<M(Q)^{\frac{2-s_c}{s_c}}E(Q).
\end{equation}

\begin{remark}\label{r5}
By the definition of $(M^{\frac{2-s_c}{s_c}}E)_c$,  we can find a
sequence of solutions $u_n$ of \eqref{1.1} with  initial data
$u_{n,0} \in H^2$, which we rescale  to satisfy  $\|u_{n,0}\|_{2}=1$,  such that
$\|\Delta u_{n,0}\|_{2}<\|Q\|^{\frac{2-s_c}{s_c}}_{2}\|\Delta
Q\|_{2}$  and $E(u_n)\downarrow (M^{\frac{2-s_c}{s_c}}E)_c$ as
$n\rightarrow \infty,$ and  $SC(u_{n,0})$ does not hold for any $n$.
\end{remark}

Our goal in this section is to show the existence of an $H^2$
solution $u_c$ of \eqref{1.1} with the initial data $u_{c,0}$ such that
$\| u_{c,0}\|^{\frac{2-s_c}{s_c}}_{2}\|\Delta
u_{c,0}\|_{2}<\|Q\|^{\frac{2-s_c}{s_c}}_{2}\|\Delta Q\|_{2}$,
$M(u_c)^{\frac{2-s_c}{s_c}}E(u_c)= (M^{\frac{2-s_c}{s_c}}E)_c$ and
  $SC(u_{c,0})$ does not hold. Moreover, we  show that if
  $\|u_c\|_{Z([0,+\infty))}=\infty$, then
$K=\{u_c(x,t)|0\leq t<\infty\}$ is precompact in  $H^2$, and a
corresponding conclusion is reached if
$\|u_c\|_{Z((-\infty,0])}=\infty$.

Prior to fulfilling  our main task, we  first establish a
profile decomposition lemma using the concentration compactness principle  in the spirit  of
Keraani \cite{keraani} and Merle \cite{km}. We also refer to  \cite{radial}
for a similar result shown  for the 3D cubic Schr\"{o}dinger equation and to \cite{J-P-S} for
the linear profile decomposition for the
one-dimensional fourth-order Schr\"oinger equation.

\begin{lemma}\label{lpd}
(Profile decomposition). Let $\phi_{n}(x)$ be a radial uniformly
bounded sequence in $H^{2}$. Then for each $M$ there exists a
subsequence of $\phi_{n}$, which is denoted by itself, such that the
following statements hold. \\
(1) For each $1\leq j\leq M$, there exists
(fixed in n) a radial profile $\psi^{j}(x)$ in $H^2$ and
 a sequence (in $n$) of time
shifts $t_{n}^{j}$, and there exists a sequence (in $n$) of
remainders $W_{n}^{M}(x)$ in $H^2$  such that
$$\phi_{n}(x)=\sum_{j=1}^{M}e^{-it_{n}^{j}\Delta^2}\psi^{j}(x)+W_{n}^{M}(x).$$
(2) The time  sequences have a pairwise divergence property, i.e.,  for $1\leq j\neq k\leq M$,
\begin{equation}\label{divergence}
\lim_{n\rightarrow+\infty}
|t_{n}^{j}-t_{n}^{k}|=+\infty.
\end{equation}
(3) The remainder sequence has the following asymptotic smallness
property:
\begin{equation}\label{remainder}
\lim_{M\rightarrow+\infty}[\lim_{n\rightarrow+\infty}\|e^{it\Delta^2}W_{n}^{M}\|_{Z(\mathbb
R)}]=0.
\end{equation}
(4) For each fixed $M$ and any $0\leq s\leq2$, we have the asymptotic
Pythagorean expansion as follows
\begin{equation}\label{hsexpansion}
\|\phi_{n}\|_{\dot{H}^{s}}^{2}
=\sum_{j=1}^{M}\|\psi^{j}\|_{\dot{H}^{s}}^{2}+\|W_{n}^{M}\|_{\dot{H}^{s}}^{2}+o_{n}(1),
\end{equation}
where $o_{n}(1)\rightarrow0$ as $n\rightarrow+\infty$.
\end{lemma}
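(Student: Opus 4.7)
The proof will follow the Bahouri--G\'erard / Keraani concentration-compactness scheme, exploiting the radial assumption so that no spatial translations are needed---only time shifts appear in the decomposition. The engine is an ``inverse Strichartz'' lemma extracting a single profile, which is then iterated; the remaining three properties follow from weak-limit bookkeeping.

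The extraction lemma I would establish first is the following: if $\{f_n\}$ is radial and bounded in $H^2$ with $\liminf_n \|e^{it\Delta^2}f_n\|_{Z(\mathbb R)} \geq \epsilon > 0$, then after passing to a subsequence there exist $t_n \in \mathbb R$ and a radial $\psi \in H^2$ such that $e^{it_n\Delta^2}f_n \rightharpoonup \psi$ weakly in $H^2$ with $\|\psi\|_{H^2} \geq c(\epsilon,\sup_n \|f_n\|_{H^2}) > 0$. Its proof uses a refined Strichartz estimate controlling $\|e^{it\Delta^2}f\|_{Z(\mathbb R)}$ by an interpolation between $\|f\|_{\dot H^{s_c}}$ and a weaker frequency-localized quantity; the radial hypothesis removes the need for a translation center because $H^2$-radial functions embed compactly into $L^q$ on bounded sets by Strauss, so one extracts the profile from the concentration in space-time via a diagonal subsequence. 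With this lemma in hand I would iterate: set $W_n^0 = \phi_n$, and at step $M$, if $\liminf_n \|e^{it\Delta^2}W_n^{M-1}\|_{Z(\mathbb R)} = \epsilon_M > 0$, extract $(t_n^M,\psi^M)$ and define $W_n^M = W_n^{M-1} - e^{-it_n^M \Delta^2}\psi^M$; this yields the claimed decomposition at stage $M$.

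The pairwise divergence \eqref{divergence} is arranged during extraction: if $|t_n^j - t_n^k|$ stayed bounded along a subsequence, then the weak convergence defining $\psi^j$ applied to $W_n^{j-1}$ would inherit a nonzero contribution from $\psi^k$, contradicting the construction at step $j$. The Pythagorean identity \eqref{hsexpansion} follows by induction on $M$: expand $\|W_n^{j-1}\|_{\dot H^s}^2 = \|e^{-it_n^j \Delta^2}\psi^j + W_n^j\|_{\dot H^s}^2$ and use $e^{it_n^j \Delta^2}W_n^j \rightharpoonup 0$ together with the elementary fact that $e^{i(t_n^j - t_n^k)\Delta^2}\psi^k \rightharpoonup 0$ whenever $|t_n^j - t_n^k| \to \infty$, proved by density from the dispersive estimates for the biharmonic propagator. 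For the asymptotic smallness \eqref{remainder}, the $\dot H^{s_c}$ Pythagorean identity yields $\sum_j \|\psi^j\|_{\dot H^{s_c}}^2 \leq \limsup_n \|\phi_n\|_{\dot H^{s_c}}^2 < \infty$, so $\|\psi^M\|_{H^2}\to 0$ as $M\to\infty$, and the quantitative bound $\epsilon_M \leq c\,\|\psi^M\|_{H^2}^{\theta}$ from the extraction lemma forces $\epsilon_M \to 0$.

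The main obstacle will be proving the refined/inverse Strichartz estimate on radial $H^2$: it is the mechanism that converts a nontrivial $Z(\mathbb R)$-norm into a concrete weak profile, and it must be tailored to the biharmonic propagator $e^{it\Delta^2}$ by combining the dispersive estimates of \cite{Be-Ko-Sa} with a Littlewood--Paley decomposition at the scaling-critical regularity $s_c$. Once that is in place, the rest of the argument is bookkeeping with weak limits.
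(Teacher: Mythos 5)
Your proposal follows essentially the same Keraani-style concentration-compactness scheme the paper uses: reduce $Z(\mathbb R)$-smallness to an $L^\infty_t L^{2N/(N-2s_c)}_x$ quantity via interpolation with Strichartz, extract one weak profile at a time via a frequency-localized, radial-Strauss argument (which removes spatial translations), iterate, and then obtain the pairwise time divergence, the Pythagorean expansion, and the vanishing of the remainder by the standard weak-limit bookkeeping together with the lower bound on $\|\psi^j\|_{\dot H^{s_c}}$ in terms of the residual concentration level. The details you leave implicit (the quantitative inverse Strichartz bound via the Schwartz frequency-annulus cutoff and the radial Gagliardo--Nirenberg decay) are precisely what the paper supplies, so the approaches coincide.
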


\begin{proof}
Let $c_1$ be such that $\|\phi_{n}\|_{H^2}\leq c_1$. By the definition of the norm $\|\cdot\|_{Z(I)}$, there
holds the interpolation inequality
$$\|v\|_{Z(\mathbb R)}\leq\|v\|^{1-\theta}_{L^q(\mathbb R;L^{r})}
\|v\|^\theta_{L^\infty(\mathbb R;L^{\frac{2N}{N-2s_c}})}$$ with some
$(q,r)\in\Lambda_{s_c}$ and some $\theta\in(0,1)$. This combined with the
Strichartz estimates gives
$$\|e^{it\Delta}W_{n}^{M}\|_{Z(\mathbb R)}\leq c\|W_{n}^{M}\|_{\dot H^{s_c}}^{1-\theta}\|e^{it\Delta}W_{n}^{M}\|^{\theta}_
{L^\infty(\mathbb R;L^{\frac{2N}{N-2s_c}})}.$$ Since
$\|W_{n}^{M}\|_{\dot H^{s_c}}\leq c_1$, it suffices to show that
\begin{equation}\label{remainder'}
\lim_{M\rightarrow+\infty}[\lim_{n\rightarrow+\infty}\|e^{it\Delta^2}W_{n}^{M}\|_
{L^\infty(\mathbb R;L^{\frac{2N}{N-2s_c}})}]=0.
\end{equation}

Let $A_1=\limsup_{n\rightarrow\infty}\|e^{it\Delta^2}\phi_n\|_
{L^\infty(\mathbb R;L^{\frac{2N}{N-2s_c}})}$. If $A_1=0$, the proof is
complete with $\psi^j=0$ for all $1\leq j\leq M$. Suppose $A_1>0$.
Passing to  a subsequence, we may assume that
$\lim_{n\rightarrow\infty}\|e^{it\Delta^2}\phi_n\|_
{L^\infty(\mathbb R;L^{\frac{2N}{N-2s_c}})}=A_1$. We will show that there is a
time sequence $t_n^1$ and a profile $\psi^1\in H^2$ such that
$e^{it_n^1\Delta^2}\phi_n\rightharpoonup\psi^1$ and
\begin{align}\label{5.7'}
\|\psi^1\|_{\dot H^{s_c}}\geq KA_1^{\frac{N}{2s_c}+\frac{N-2s_c}{\min\{2s_c,4-2s_c\}}}
\left(\frac1{c_1}\right)^{\frac{N}{2s_c}-1+\frac{N-2s_c}{\min\{2s_c,4-2s_c\}}}.
\end{align}
For $r>1$ to be chosen, let $\chi(x)$ be a radial Schwartz function such that $\hat{\chi}(\xi)=1$
for $\frac1r\leq|\xi|\leq r$ and $\hat{\chi}(\xi)$ is supported in $\frac1{2r}\leq|\xi|\leq 2r$.

Since the operator $e^{it\Delta^2}$ is an isometry  on $\dot H^{s_c}$, then
by the  Sobolev embedding,
\begin{align*}
&\|e^{it\Delta^2}\phi_n-\chi
e^{it\Delta^2}\phi_n\|^2_{L^\infty(\mathbb R;L^{\frac{2N}{N-2s_c}})}
\leq\sup_t\|e^{it\Delta^2}\phi_n-\chi e^{it\Delta^2}\phi_n\|^2_{\dot H^{s_c}}\\
 \leq&\sup_t\int
|\xi|^{2s_c}(1-\hat{\chi}(\xi))^2|\hat{\phi_n}(\xi)|^2d\xi
 \leq\int_{|\xi|\leq\frac1r}
|\xi|^{2s_c}|\hat{\phi_n}(\xi)|^2d\xi+
\int_{|\xi|\geq r}
|\xi|^{2s_c}|\hat{\phi_n}(\xi)|^2d\xi\\
 \leq&\frac1{r^{2s_c}}\|\phi_n\|_2+\frac1{r^{4-2s_c}}\|\phi_n\|_{\dot H^2}^2
 \leq(\frac1{r^{2s_c}}+\frac1{r^{4-2s_c}})c_1^2.
\end{align*}
Take $r$ sufficiently large such that
 $(\frac1{r^{2s_c}}+\frac1{r^{4-2s_c}})c_1^2=\frac{
A_1^2}4\epsilon_0$ with some $0<\epsilon_0<1$. (This implies that
$\frac1r\geq c\left(\frac{A_1^2}{c_1^2}
\right)^{\frac{1}{min\{2s_c,4-2s_c\}}}$.) Then, for $n$ large, we
have $\|\chi\ast
e^{it\Delta^2}\phi_n\|_{L^\infty(\mathbb R;L^{\frac{2N}{N-2s_c}})}\geq\frac12A_1$.
Thus from
\begin{align*}
\|\chi\ast
e^{it\Delta^2}\phi_n\|^{\frac{2N}{N-2s_c}}_{L^\infty(\mathbb R;L^{\frac{2N}{N-2s_c}})}
\leq \|\chi\ast e^{it\Delta^2}\phi_n\|^{2}_{L^\infty(\mathbb R;L^{2})}
\|\chi\ast
e^{it\Delta^2}\phi_n\|^{\frac{4s_c}{N-s_c}}_{L^\infty(\mathbb R;L^{\infty})}
\leq \|\phi_n\|^{2}_{2} \|\chi\ast
e^{it\Delta^2}\phi_n\|^{\frac{4s_c}{N-s_c}}_{L^\infty(\mathbb R;L^{\infty})},
\end{align*}
we get that
$\|\chi\ast e^{it\Delta^2}\phi_n\|_{L^\infty(\mathbb R;L^{\infty})}
\geq\left(\frac{A_1}2\right)^{\frac{N}{2s_c}}\left(\frac1{c^2_1}\right)^{\frac{N-s_c}{4s_c}}
$. Since $\phi_n$ are radial functions, so are $\chi\ast e^{it\Delta^2}\phi_n$.
By the radial Gagliardo-Nirenberg inequality, we obtain that
$$\|\chi\ast e^{it\Delta^2}\phi_n\|_{L^\infty(\mathbb R;L^{\infty}(|x|\geq R))}
\leq\frac1R\|\chi\ast e^{it\Delta^2}\phi_n\|_{L^\infty(\mathbb R;L^{2})}^{\frac12}
\|\nabla\chi\ast e^{it\Delta^2}\phi_n\|_{L^\infty(\mathbb R;L^{2})}^{\frac12}
\leq\frac{c_1}R.
$$
Therefore, by selecting $R$ large enough, $\|\chi\ast
e^{it\Delta^2}\phi_n\|_{L^\infty(\mathbb R;L^{\infty}(|x|\leq R))}
\geq\frac12\left(\frac{A_1}2\right)^{\frac{N}{2s_c}}\left(\frac1{c^2_1}\right)^{\frac{N-s_c}{4s_c}}$.
Let $t_n^1$ and $x_n^1$ with $|x_n^1|\leq R$ be the  sequences such
that for each $n$, $|\chi\ast e^{it_n^1\Delta^2}\phi_n(x_n^1)|
\geq\frac14\left(\frac{A_1}2\right)^{\frac{N}{2s_c}}\left(\frac1{c^2_1}\right)^{\frac{N-s_c}{4s_c}}$,
or $$\left|\int\chi(x_n^1-y) e^{it_n^1\Delta^2}\phi_n(y)dy\right|
\geq\frac14\left(\frac{A_1}2\right)^{\frac{N}{2s_c}}\left(\frac1{c^2_1}\right)^{\frac{N-s_c}{4s_c}}.$$
Passing to a subsequence such that $x_n^1\rightarrow x^1$, which is
possible because $|x_n^1|\leq R$, we obtain that
$$\left|\int\chi(x^1-y) e^{it_n^1\Delta^2}\phi_n(y)dy\right|
\geq\frac18\left(\frac{A_1}2\right)^{\frac{N}{2s_c}}\left(\frac1{c^2_1}\right)^{\frac{N-s_c}{4s_c}}.$$
Since the sequence $e^{it_n^1\Delta^2}\phi_n$ is uniformly bounded in $H^2$, then we can find a radial function  $\psi^1\in H^2$
such that, up to a subsequence, $e^{it_n^1\Delta^2}\phi_n\rightharpoonup\psi^1$
weakly in $H^2$ with $\|\psi^1\|_{H^2}\leq c_1$.
Thus,
$$
\left|\int\chi(x^1-y) \psi^1(y)dy\right|
\geq\frac18\left(\frac{A_1}2\right)^{\frac{N}{2s_c}}\left(\frac1{c^2_1}\right)^{\frac{N-s_c}{4s_c}}.$$
By Plancherel and Cauchy-Schwarz inequalities, $\|\chi\|_{\dot
H^{-s_c}}\|\psi^1\|_{\dot H^{s_c}}\geq
\frac18\left(\frac{A_1}2\right)^{\frac{N}{2s_c}}\left(\frac1{c^2_1}\right)^{\frac{N-s_c}{4s_c}}.$
Since $\|\chi\|_{\dot H^{-s_c}}\leq cr^{\frac N2-s_c}$, then
$\|\psi^1\|_{\dot H^{s_c}}\geq
\frac1{8c}\left(\frac{A_1}2\right)^{\frac{N}{2s_c}}\left(\frac1{c^2_1}\right)^{\frac{N-s_c}{4s_c}}r^{-(\frac
N2-s_c)}.$ In view of the choice of $r$, we obtain for some constant
$K$ such that \eqref{5.7'} holds, concluding the claim.

Let $W_n^1=\phi_n-e^{-it_n^1\Delta^2}\psi^1$. Then  since $e^{it_n^1\Delta^2}W_n^1\rightharpoonup0$ weakly in $H^2$,
for any $s\in[0,2]$,
$$<\phi_n,e^{-it_n^1\Delta^2}\psi^1>_{\dot H^{s}}=<e^{it_n^1\Delta^2}\phi_n,\psi^1>_{\dot H^{s}}
\rightarrow\|\psi^1\|^2_{\dot H^{s}}.$$
By expanding $\|W_n^1\|^2_{\dot H^{s}}$, we obtain
$$\lim_{n\rightarrow\infty}\|W_n^1\|^2_{\dot H^{s}}=\lim_{n\rightarrow\infty}\|\phi_n\|^2_{\dot H^{s}}
-\|\psi^1\|_{\dot H^{s}}.$$
From this with $s=0$ and $s=2$, we deduce that $\|W_n^1\|_{ H^{2}}\leq c_1.$

Let $A_2=\limsup_{n\rightarrow\infty}\|e^{it\Delta^2}W_n^1\|_
{L^\infty(\mathbb R;L^{\frac{2N}{N-2s_c}})}$. If $A_2=0$, then we are done.
If $A_2>0$, then we  repeat the above argument with $\phi_n$
replaced by $W_n^1$ to obtain a sequence of time shifts $t_n^2$ and
a profile $\psi^2\in H^2$ such that
$e^{it_n^2\Delta^2}W_n^1\rightharpoonup\psi^2$ weakly in $H^2$, and
$$\|\psi^2\|_{\dot H^{s_c}}\geq
K\left( A_2 \right)^{\frac{N}{2s_c}+\frac{N-2s_c}{min\{2s_c,4-2s_c\}}}\left(\frac1{c_1}\right)^{\frac{N-s_c}{2s_c}
+\frac{N-2s_c}{min\{2s_c,4-2s_c\}}}.$$
We show that $|t_n^1-t_n^2|\rightarrow\infty$. Indeed, if we suppose, up to a subsequence,
$t_n^2-t_n^1\rightarrow t_0$ finite, then
$$e^{i(t_n^2-t_n^1)\Delta^2}(e^{it_n^1\Delta^2}\phi_n-\psi^1)=e^{it_n^2\Delta^2}(\phi_n-e^{-it_n^1\Delta^2}\psi^1)=
e^{it_n^2\Delta^2}W_n^1\rightharpoonup\psi^2.$$
Since $e^{it_n^1\Delta^2}\phi_n-\psi^1\rightharpoonup0$, the left side of the above expression converges weakly to 0,
and so $\psi^2=0$, a contradiction. Let $W_n^2=\phi_n-e^{-it_n^1\Delta^2}\psi^1-e^{-it_n^2\Delta^2}\psi^2$.
Note that for any $s\in[0,2]$,
$$<\phi_n,e^{-it_n^2\Delta^2}\psi^2>_{\dot H^{s}}=<e^{it_n^2\Delta^2}\phi_n,\psi^2>_{\dot H^{s}}
=<e^{it_n^2\Delta^2}(\phi_n-e^{it_n^1\Delta^2}\psi^1),\psi^2>_{\dot H^{s}}+o_n(1)
\rightarrow\|\psi^2\|^2_{\dot H^{s}}.$$
We expand $$\lim_{n\rightarrow\infty}\|W_n^2\|^2_{\dot H^{s}}=\lim_{n\rightarrow\infty}\|\phi_n\|^2_{\dot H^{s}}
-\|\psi^1\|^2_{\dot H^{s}}-\|\psi^2\|^2_{\dot H^{s}},$$
and obtain $\|W_n^2\|_{ H^{2}}\leq c_1.$

We continue inductively, constructing a sequence $t_n^M$ and a profile $\psi^M$ such that
$e^{it_n^M\Delta^2}W_n^{M-1}\rightharpoonup\psi^M$ weakly in $H^2$, and
$$\|\psi^M\|_{\dot H^{s_c}}\geq
K\left( A_M \right)^{\frac{N}{2s_c}+\frac{N-2s_c}{min\{2s_c,4-2s_c\}}}\left(\frac1{c_1}\right)^{\frac{N-s_c}{2s_c}
+\frac{N-2s_c}{min\{2s_c,4-2s_c\}}}.$$
Suppose $1\leq j<M$. We show that $|t_n^M-t_n^j|\rightarrow\infty$ inductively by assuming that
$|t_n^M-t_n^{j+1}|\rightarrow\infty,\ldots, |t_n^M-t_n^{M-1}|\rightarrow\infty$.
In fact, Suppose up to a subsequence $t_n^M-t_n^j\rightarrow t_0$ finite.  We have
\begin{align*}
&e^{i(t_n^M-t_n^j)\Delta^2}(e^{it_n^j\Delta^2}W^{j-1}_n-\psi^j)
-e^{i(t_n^M-t_n^{j+1})\Delta^2}\psi^{j+1}-\cdots -e^{i(t_n^M-t_n^{M-1})\Delta^2}\psi^{M-1}\\
&=e^{it_n^2\Delta^2}(\phi_n-e^{it_n^1\Delta^2}\psi^1-\cdots-e^{-it_n^{M-1}\Delta^2}\psi^{M-1})=
e^{it_n^M\Delta^2}W_n^{M-1}\rightharpoonup\psi^{M}.
\end{align*}
Since the left side converges weakly to 0, then we get a contradiction since $\psi^{M}\neq0$.
This proves \eqref{divergence}. Let $W_n^M=\phi_n-e^{-it_n^1\Delta^2}\psi^1-e^{-it_n^2\Delta^2}\psi^2
-\cdots -e^{-it_n^M\Delta^2}\psi^M$.
Note that
 \begin{align*}
&<\phi_n,e^{-it_n^M\Delta^2}\psi^M>_{\dot H^{s}}=<e^{it_n^M\Delta^2}\phi_n,\psi^M>_{\dot H^{s}} \\
= &<e^{it_n^M\Delta^2}(\phi_n-e^{it_n^1\Delta^2}\psi^1-\cdots -e^{-it_n^{M-1}\Delta^2}\psi^{M-1}),\psi^M>_{\dot H^{s}}+o_n(1)\\
=&<e^{it_n^M\Delta^2}W_n^{M-1},\psi^M>_{\dot H^{s}}+o_n(1)
\rightarrow\|\psi^M\|^2_{\dot H^{s}},
\end{align*}
where the second line follows from the pairwise divergence property \eqref{divergence}.
The expansion \eqref{hsexpansion} is then shown by expanding $\|W_n^{M}\|^2_{\dot H^{s}}$.

By \eqref{hsexpansion} and $\|\psi^M\|_{\dot H^{s_c}}\geq
K\left( A_M \right)^{\frac{N}{2s_c}+\frac{N-2s_c}{min\{2s_c,4-2s_c\}}}\left(\frac1{c_1}\right)^{\frac{N-s_c}{2s_c}
+\frac{N-2s_c}{min\{2s_c,4-2s_c\}}}$, we get that
$$\sum_{M=1}^\infty \left(K\left( A_M \right)^{\frac{N}{2s_c}+\frac{N-2s_c}{min\{2s_c,4-2s_c\}}}\left(\frac1{c_1}\right)^{\frac{N-s_c}{2s_c}
+\frac{N-2s_c}{min\{2s_c,4-2s_c\}}}\right)^2\leq\lim_{n\rightarrow\infty}\|\phi_n\|^2_{\dot H^{s}}\leq c_1^2.$$
Since $N>2s_c$,  then  $A_M\rightarrow0$ as $M\rightarrow\infty$, which implies \eqref{remainder}.
\end{proof}

 \begin{lemma}\label{energy  expansion}(Energy pythagorean expansion)
 In the situation of Lemma \ref{lpd}, we have
 \begin{equation}\label{epe}
E(\phi_{n})=\sum_{j=1}^{M}E(e^{-it_{n}^{j}\Delta^2}\psi^{j})+E(W_{n}^{M})+o_n(1).
\end{equation}
\end{lemma}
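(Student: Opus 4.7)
Split the energy into its kinetic and potential parts: $E(u)=\tfrac12\|\Delta u\|_2^2-\tfrac1{p+1}\|u\|_{p+1}^{p+1}$. For the kinetic piece, \eqref{hsexpansion} with $s=2$ combined with the isometry $\|\Delta e^{-it_n^j\Delta^2}\psi^j\|_2=\|\Delta\psi^j\|_2$ delivers the Pythagorean expansion for free, so the content of the lemma is reduced to proving
\begin{equation}\label{Lp1exp}
\|\phi_n\|_{p+1}^{p+1}=\sum_{j=1}^M\|v_n^j\|_{p+1}^{p+1}+\|W_n^M\|_{p+1}^{p+1}+o_n(1),
\end{equation}
where $v_n^j:=e^{-it_n^j\Delta^2}\psi^j$.

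By passing to a subsequence and absorbing any finite limit into $\psi^j$, one may assume that for each $j$ either $t_n^j\equiv 0$ or $|t_n^j|\to\infty$, and then \eqref{divergence} forces at most one index $j_0$ with $t_n^{j_0}\equiv 0$. The standard $L^1$--$L^\infty$ dispersive bound $\|e^{it\Delta^2}f\|_\infty\leq C|t|^{-N/4}\|f\|_1$, interpolated with the $L^2$ isometry, yields a time decay $\|e^{it\Delta^2}f\|_{p+1}\leq C|t|^{-\alpha}\|f\|_{(p+1)'}$ with $\alpha=N(p-1)/(4(p+1))>0$ on the Schwartz class; a density argument based on Sobolev embedding (the range $p+1\in(2,2N/(N-4))$ guarantees $\dot H^{s_c}\hookrightarrow L^{p+1}$) extends this to $\psi^j\in H^2$, giving $\|v_n^j\|_{p+1}\to 0$ whenever $|t_n^j|\to\infty$. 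H\"older's inequality then shows the pairwise cross terms $\int|v_n^j|^a|v_n^k|^b\,dx$ with $j\neq k$, $a+b=p+1$ vanish as $n\to\infty$, hence
$$\bigg\|\sum_{j=1}^M v_n^j\bigg\|_{p+1}^{p+1}=\sum_{j=1}^M\|v_n^j\|_{p+1}^{p+1}+o_n(1).$$

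It remains to handle the interaction with $W_n^M$. By induction on the construction in Lemma \ref{lpd} (using \eqref{divergence} to kill the shifted oscillations), one has $e^{it_n^j\Delta^2}W_n^M\rightharpoonup 0$ in $H^2$ for every $1\leq j\leq M$; in particular when $j_0$ exists this gives $W_n^M\rightharpoonup 0$ in $H^2$, hence $W_n^M\to 0$ in $L^{p+1}_{\mathrm{loc}}(\mathbb{R}^N)$ by the Rellich--Kondrachov compact embedding (valid since $p+1<2N/(N-4)$). Writing $V_n^M=\sum_j v_n^j=\psi^{j_0}+o_{L^{p+1}}(1)$ (with the convention $\psi^{j_0}=0$ when no such $j_0$ is present) and using the elementary inequality
$$\bigl||a+b|^{p+1}-|a|^{p+1}-|b|^{p+1}\bigr|\leq C\bigl(|a|^p|b|+|a||b|^p\bigr),$$
I reduce \eqref{Lp1exp} to showing $\int|\psi^{j_0}|^p|W_n^M|\,dx\to 0$ and $\int|\psi^{j_0}||W_n^M|^p\,dx\to 0$. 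Both are established by approximating $\psi^{j_0}\in L^{p+1}$ by $\varphi\in C_c^\infty(\mathbb{R}^N)$: the compactly supported piece is killed by $W_n^M\to 0$ in $L^{p+1}(\mathrm{supp}\,\varphi)$, while the tail is controlled using $\|\psi^{j_0}-\varphi\|_{p+1}$ together with the uniform $L^{p+1}$-bound on $W_n^M$. The single delicate point is precisely this last step --- weak $H^2$ convergence of $W_n^M$ alone is not enough, and one must bootstrap it with Rellich compactness and the density of compactly supported functions in $L^{p+1}$ to make the cross term genuinely small.
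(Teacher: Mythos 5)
Your proof is correct, and it takes a route that is genuinely more elementary than the paper's in one respect. The reduction of \eqref{epe} to the $L^{p+1}$ expansion \eqref{Lp1exp} via \eqref{hsexpansion} (with $s=2$) and the $\dot H^2$-isometry of $e^{-it_n^j\Delta^2}$ is exactly what the paper does, and your treatment of the profiles with $|t_n^j|\to\infty$ via the $|t|^{-N/4}$ dispersive decay of $e^{it\Delta^2}$, interpolation, and an $\dot H^{s_c}\hookrightarrow L^{p+1}$ density argument is the same idea as the paper's appeal to ``the $L^q$ spacetime decay of the linear flow.'' Where you diverge is in handling the remainder $W_n^M$. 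The paper invokes the compact embedding $H^2_{\mathrm{rad}}(\mathbb R^N)\hookrightarrow L^{p+1}(\mathbb R^N)$ (a Strauss-type global compactness for radial functions) to upgrade $W_n^{j_0-1}\rightharpoonup\psi^{j_0}$ in $H^2$ to strong convergence in $L^{p+1}$ on all of $\mathbb R^N$, after which $W_n^M\to 0$ strongly in $L^{p+1}$ and the nonlinear Pythagorean identity follows with no cross-term analysis. You instead use only the local Rellich--Kondrachov compactness plus a decomposition of $\psi^{j_0}$ into a compactly supported piece and a small-$L^{p+1}$ tail, paying for this with the elementary inequality $\bigl||a+b|^{p+1}-|a|^{p+1}-|b|^{p+1}\bigr|\le C(|a|^p|b|+|a||b|^p)$ and a cross-term estimate. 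Your route is longer but does not use radiality at this step, so it would survive verbatim in a non-radial profile decomposition with spatial translations, whereas the paper's route is shorter precisely because it exploits the radial hypothesis. Both arguments are sound; you correctly identified that weak $H^2$ convergence of $W_n^M$ alone is the gap, and the paper closes it by radial compactness while you close it by localization and density.
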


\begin{proof}
According to \eqref{hsexpansion}, it suffices to establish for all $M\geq1$,
 \begin{equation}\label{nonlinear expansion}
\|\phi_{n}\|_{p+1}^{p+1}=\sum_{j=1}^{M}\|e^{-it_{n}^{j}\Delta^2}\psi^{j}\|_{p+1}^{p+1}+\|W_{n}^{M}\|_{p+1}^{p+1}+o_n(1).
\end{equation}
In fact, there are only two cases to consider.
Case 1. There exists some $j$ for which $t_n^j$ converges to a finite number, which, without loss of generality,
we assume is 0. In this case we will show that $\lim_{n\rightarrow\infty}\|W_n^M\|_{p+1}=0$  for $M>j$,
 $\lim_{n\rightarrow\infty}\|e^{-it_{n}^{k}\Delta^2}\psi^{k}\|_{p+1}=0$ for all $k\neq j$, and
 $\lim_{n\rightarrow\infty}\|\phi_n\|_{p+1}=\|\psi^{j}\|_{p+1}$, which gives \eqref{nonlinear expansion}.
 Case 2. For all $j$, $|t_n^j|\rightarrow\infty$. In this case we will show that
 $\lim_{n\rightarrow\infty}\|e^{-it_{n}^{k}\Delta^2}\psi^{k}\|_{p+1}=0$ for all $k$ and
 $\lim_{n\rightarrow\infty}\|\phi_n\|_{p+1}=\lim_{n\rightarrow\infty}\|W_n^M\|_{p+1}$, which  gives
 \eqref{nonlinear expansion} again.

 For Case 1, we infer from the proof of Lemma \ref{lpd} that $W_n^{j-1}\rightharpoonup\psi^j$.
 By the compactness of the embedding $H^2_{rad}\hookrightarrow L^{p+1}$, it follows that
$W_n^{j-1}\rightarrow\psi^j$ strongly in $L^{p+1}$. Let $k\neq j$. Then we get from \eqref{divergence}
that $|t_n^k|\rightarrow\infty$. As argued in the proof of Lemma \ref{lpd}, from Sobolev embedding
and the $L^q$ spacetime decay estimate of the linear flow, we obtain that
$\|e^{-it_{n}^{k}\Delta^2}\psi^{k}\|_{p+1}\rightarrow0$. Recalling that $$
W_n^{j-1}=\phi_n-e^{-it_{n}^{1}\Delta^2}\psi^{1}-\cdots -e^{-it_{n}^{j-1}\Delta^2}\psi^{j-1},$$
we conclude that $\phi_n\rightarrow\psi^j$ strongly in $L^{p+1}$. Since
$$W_n^M=
W_n^{j-1}-\psi^j-e^{-it_{n}^{j+1}\Delta^2}\psi^{j+1}-\cdots -e^{-it_{n}^{M}\Delta^2}\psi^{M},$$
we also conclude that $\lim_{n\rightarrow\infty}\|W_n^M\|_{p+1}\rightarrow0$ strongly in $L^{p+1}$, for $M>j$.

Case 2 follows similarly from the proof of Case 1.
\end{proof}

\begin{proposition}\label{critical}

There exists a radial $u_{c,0}$ in $H^2$ with
$$E(u_{c,0})=(M^{\frac{2-s_c}{s_c}}E)_c<M(Q)^{\frac{2-s_c}{s_c}}E(Q),\ \
\ \|\Delta u_{c,0}\|_2<\|Q\|^{\frac{2-s_c}{s_c}}_2\|\Delta Q\|_2$$
such that if $u_c$ is the corresponding solution of \eqref{1.1} with
the initial data $u_{c,0}$, then $u_c$ is global and $\|
u_c\|_{Z(\mathbb R)}=+\infty.$
\end{proposition}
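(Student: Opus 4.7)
The plan is to follow the Kenig--Merle concentration--compactness scheme. Starting from the minimizing sequence $u_{n,0}$ supplied by Remark \ref{r5}, I would apply the profile decomposition Lemma \ref{lpd} to write, up to a subsequence,
\[
u_{n,0}=\sum_{j=1}^M e^{-it_n^j\Delta^2}\psi^j+W_n^M,
\]
with pairwise divergent time shifts \eqref{divergence} and remainder satisfying \eqref{remainder}. The Pythagorean identity \eqref{hsexpansion} applied at $s=0,s_c,2$, together with the energy expansion \eqref{epe}, yields an asymptotic splitting of the mass, the kinetic norm, and the energy of $u_{n,0}$ into the contributions of the profiles and of $W_n^M$. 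Since $u_{n,0}$ satisfies the sub-threshold hypotheses of Theorem \ref{dichotomy} with $E(u_{n,0})\to(M^{\frac{2-s_c}{s_c}}E)_c$, each profile and each $W_n^M$ must also satisfy them for $n$ large.

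Next I would attach to each $\psi^j$ a \emph{nonlinear profile} $v^j\in C(\mathbb R,H^2)$: if $t_n^j\to t_\infty^j$ finite (WLOG $t_\infty^j=0$), take $v^j$ to solve \eqref{1.1} with datum $\psi^j$ at $t=0$; if $|t_n^j|\to\infty$, use Proposition \ref{wave operator} to produce a global $v^j$ with $\|v^j(t)-e^{it\Delta^2}\psi^j\|_{H^2}\to 0$ in the appropriate time direction. In both cases $v^j$ is global and obeys the sub-threshold inequality of Theorem \ref{dichotomy}. If more than one profile is nontrivial, the Pythagorean identities force $M(v^j)^{\frac{2-s_c}{s_c}}E(v^j)<(M^{\frac{2-s_c}{s_c}}E)_c$ for every $j$, so by the very definition of the critical threshold $SC(v^j(0))$ holds and $\|v^j\|_{Z(\mathbb R)}<\infty$.

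Then I would form the approximate solution
\[
\tilde u_n(t)=\sum_{j=1}^M v^j(t-t_n^j)+e^{it\Delta^2}W_n^M,
\]
whose $Z(\mathbb R)$-norm is bounded uniformly in $n$ by $\sum_{j=1}^M\|v^j\|_{Z(\mathbb R)}$ plus a small contribution from the remainder by \eqref{remainder}. The error $e_n:=i\partial_t\tilde u_n+\Delta^2\tilde u_n-|\tilde u_n|^{p-1}\tilde u_n$ consists only of cross-interaction terms among the shifted profiles together with interactions between the profiles and $e^{it\Delta^2}W_n^M$; the pairwise divergence \eqref{divergence} combined with \eqref{remainder} should drive $\|e_n\|_{L^{q'}(\mathbb R;L^{r'})}\to 0$ for the dual $\dot H^{-s_c}$-admissible pair. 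Moreover, $\|e^{it\Delta^2}(\tilde u_n(0)-u_{n,0})\|_{Z(\mathbb R)}\to 0$, directly for finite shifts and via the wave operator definition for infinite ones. Lemma \ref{perturb} would then yield $\|u_n\|_{Z(\mathbb R)}<\infty$ for $n$ large, contradicting the defining property of $u_{n,0}$. Hence exactly one profile $\psi^1$ survives with $\|W_n^1\|_{H^2}\to 0$; the same perturbation argument forbids $|t_n^1|\to\infty$, so after a further subsequence $t_n^1\to t_0$ finite. Setting $u_{c,0}:=e^{-it_0\Delta^2}\psi^1=\lim_n u_{n,0}$ in $H^2$ and $u_c$ the associated solution of \eqref{1.1} will then produce the desired critical element.

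The main obstacle will be the smallness of the error $e_n$ in the dual admissible norm used in Lemma \ref{perturb}. Each cross term $|v^j(\cdot-t_n^j)|^{p-1}v^k(\cdot-t_n^k)$ with $j\neq k$ must be shown to vanish in $L^{q'}(\mathbb R;L^{r'})$ as $n\to\infty$; this requires truncating each profile $v^j$ to a large time window $[-T,T]$, applying the substitution $s=t-t_n^j$ together with \eqref{divergence} so that the windows become asymptotically disjoint, and then letting $T\to\infty$ using the global $Z$-bound $\|v^j\|_{Z(\mathbb R)}<\infty$. The interaction with $e^{it\Delta^2}W_n^M$ is controlled by \eqref{remainder} after a H\"older argument in the $Z$-norm followed by an interpolation into the dual Strichartz norm.
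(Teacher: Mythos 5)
Your overall route (profile decomposition, nonlinear profiles via wave operators, long-time perturbation) matches the paper's, and the multi-profile exclusion is handled the same way. The gap is in the endgame, after one profile $\psi^1$ remains. You assert that $\|W_n^1\|_{H^2}\to0$ and that ``the same perturbation argument forbids $|t_n^1|\to\infty$,'' and on this basis set $u_{c,0}=e^{-it_0\Delta^2}\psi^1=\lim_n u_{n,0}$. Neither claim is available at that point in the argument, and the second is actually false in general: when $|t_n^1|\to\infty$, Proposition~\ref{wave operator} only gives a nonlinear profile $v^1=NLS(t)\tilde\psi^1$ that scatters in the $t_n^1$-direction, with $M(\tilde\psi^1)\le1$ and $E(\tilde\psi^1)=\frac12\|\Delta\psi^1\|_2^2\le(M^{\frac{2-s_c}{s_c}}E)_c$; if these inequalities are equalities, $SC(\tilde\psi^1)$ is not forced to hold, so $\|v^1\|_{Z(\mathbb R)}$ may be infinite and no contradiction is obtained from Lemma~\ref{perturb}. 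Rather, $\tilde\psi^1$ is precisely the candidate critical datum in that case.

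The paper's logic is the reverse of yours: it constructs $\tilde\psi^1$ in both cases (identity in the finite-time case, wave operator when $|t_n^1|\to\infty$), sets $u_{c,0}=\tilde\psi^1$, and writes $u_{n,0}=NLS(-t_n^1)\tilde\psi^1+\tilde W_n^M$. It then shows $\|u_c\|_{Z(\mathbb R)}=\infty$ by contradiction — if $\|u_c\|_{Z(\mathbb R)}=A<\infty$, then $\tilde u_n(t)=NLS(t-t_n^1)\tilde\psi^1$ is a bounded approximate solution with zero forcing error and $\|e^{it\Delta^2}\tilde W_n^M\|_{Z(\mathbb R)}\to0$, so Lemma~\ref{perturb} would give $\|u_n\|_{Z(\mathbb R)}<\infty$, contradicting Remark~\ref{r5}. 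Only \emph{after} this does one conclude, from the definition of $(M^{\frac{2-s_c}{s_c}}E)_c$ and the Pythagorean expansions, that $M(u_c)=1$, $E(u_c)=(M^{\frac{2-s_c}{s_c}}E)_c$, and hence $\|W_n^M\|_{H^2}\to0$. You have inverted the order of deduction, which is why the step ``$|t_n^1|\to\infty$ is forbidden'' appears — it is not a lemma you can use to simplify to the finite-shift case; it is a consequence that is only proved later (and, in this proposition, is not needed at all). A secondary point: you include $e^{it\Delta^2}W_n^M$ in $\tilde u_n$; the paper instead sets $\tilde u_n=\sum_j v^j(t-t_n^j)$ and absorbs $\tilde W_n^M$ into the initial-data mismatch $\|e^{it\Delta^2}(\tilde u_n(0)-u_{n,0})\|_{Z}$. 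Both bookkeepings work, but yours then puts profile--remainder interaction terms into $e_n$, which you would need to control separately in $Z'(\mathbb R)$ rather than relying only on the pairwise divergence of the $t_n^j$.
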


\begin{proof}
Recall from Remark \ref{r5}, we have obtained  a radial sequence $u_n$ with
$\|u_n\|_2=1$  in the beginning of this section,  satisfying
 $\|\Delta u_{n,0}\|_{2}<\|Q\|^{\frac{2-s_c}{s_c}}_2\|\Delta Q\|_2$  and $E(u_n)\downarrow (M^{\frac{2-s_c}{s_c}}E)_c$ as
$n\rightarrow \infty.$ Each $u_n$ is global and nonscattering such
that $\| u_n\|_{Z(\mathbb R)}=+\infty.$ We  apply  Lemma \ref{lpd}
to $u_{n,0}$, which is uniformly bounded in $H^2$ to get
\begin{align}\label{5.7}
u_{n,0}(x)=\sum_{j=1}^{M}e^{-it_{n}^{j}\Delta^2}\psi^{j}(x)+W_{n}^{M}(x).
\end{align}
Then by \eqref{epe}, we have further
 \begin{equation*}
\sum_{j=1}^{M}\lim_nE(e^{-it_{n}^{j}\Delta^2}\psi^{j})+\lim_nE(W_{n}^{M})=\lim_nE(u_{n,0})=(M^{\frac{2-s_c}{s_c}}E)_c.
\end{equation*}
Since also by the profile expansion, we have
 \begin{equation*}
\|\Delta u_{n,0}\|_2^2=\sum_{j=1}^{M}\|\Delta e^{-it_{n}^{j}\Delta^2}\psi^{j}\|_2^2+\|\Delta W_{n}^{M}\|_2^2+o_n(1),
\end{equation*}
and
 \begin{equation}\label{5.9}
1=\|  u_{n,0}\|_2^2=\sum_{j=1}^{M}\| \psi^{j}\|_2^2+\|  W_{n}^{M}\|_2^2+o_n(1).
\end{equation}
Since  from the proof of Lemma \ref{4.5}, each energy is nonnegative
and then
\begin{align}\label{5.8}
\lim_nE(e^{-it_{n}^{j}\Delta^2}\psi^{j})\leq(M^{\frac{2-s_c}{s_c}}E)_c.
\end{align}
Now, if more than one $\psi^{j}\neq0$, we will show  a contradiction
 and thus the profile expansion will be reduced  to
the case that  only one  profile is not null.

In fact, if more than one $\psi^{j}\neq0$,  then by  \eqref{5.9} we must
have $M( \psi^{j})<1$ for each $j$, which together with \eqref {5.8}
implies that for $n$ large enough,
 \begin{equation*}
M(e^{-it_{n}^{j}\Delta^2}\psi^{j})^{\frac{2-s_c}{s_c}}E(e^{-it_{n}^{j}\Delta}\psi^{j})<(M^{\frac{2-s_c}{s_c}}E)_c.
\end{equation*}
For a given $j$, if $|t_n^j|\rightarrow+\infty$ we assume $t_n^j\rightarrow+\infty$
or $t_n^j\rightarrow-\infty$  up to a subsequence. In this case, by the proof of
Lemma \ref{energy  expansion}, we have $\lim_{n\rightarrow+\infty}\|e^{-it_{n}^{j}\Delta}\psi^{j}\|_{p+1}^{p+1}=0,$
and thus,
$\frac{1}{2}\| \psi^{j}\|_2^{\frac{2(2-s_c)}{s_c}}\|\Delta \psi^{j}\|_2^2=
\frac{1}{2}\|e^{-it_{n}^{j}\Delta^2} \psi^{j}\|_2^{\frac{2(2-s_c)}{s_c}}\|\Delta e^{-it_{n}^{j}\Delta^2}\psi^{j}\|_2^2<(M^{\frac{2-s_c}{s_c}}E)_c$.
If we denote by $NLS(t)\phi$   a solution of \eqref{1.1} with initial data $\phi$,
then we get  from the existence of wave operators ( Proposition \ref{wave operator} )that
there exists $\tilde{\psi}^{j}$ such that
$$\|NLS(-t_{n}^{j})\tilde{\psi}^{j}-e^{-it_{n}^{j}\Delta^2}\psi^{j}\|_{H^2}\rightarrow0,\ \ as\ \ n\rightarrow+\infty$$
with
$$\| \tilde{\psi}^{j}\|^{\frac{2-s_c}{s_c}}_2\|\Delta NLS(t)\tilde{\psi}^{j}\|_2<
\| Q\|^{\frac{2-s_c}{s_c}}_2\|\Delta Q\|_2$$
$$\| \tilde{\psi}^{j}\|_2=\| \psi^{j}\|_2,\ \ \ E(\tilde{\psi}^{j})=\frac{1}{2}\|\Delta \psi^{j}\|_2^2.$$
Thus we get that
 $M(\tilde{\psi}^{j})^{\frac{2-s_c}{s_c}}E(\tilde{\psi}^{j})<(M^{\frac{2-s_c}{s_c}}E)_c,$
 and so
$ \|NLS(t) \tilde{\psi}^{j}\|_{Z(\mathbb R)}<+\infty.$  If, on the
other hand, for the given $j$,  $t_{n}^{j}\rightarrow t'$ finite (at most only one such j by \eqref{divergence}),
then by the continuity of the linear flow in $H^2$, we have
$$e^{-it_{n}^{j}\Delta^2} \psi^{j}\rightarrow e^{-it'\Delta^2} \psi^{j} \ \ \ strongly \ \ in\ \ H^2.$$
In this case, we set $\tilde{\psi}^{j}=NLS(t')[e^{-it'\Delta^2} \psi^{j}]$ so that
 $NLS(-t')\tilde{\psi}^{j}=e^{-it'\Delta^2} \psi^{j}$.
To sum up,  in either case, we obtain a new profile $\tilde{\psi}^{j}$
for the given $\psi^{j}$  such that
\begin{align}\label{nl}
\|NLS(-t_{n}^{j})\tilde{\psi}^{j}-e^{-it_{n}^{j}\Delta^2}\psi^{j}\|_{H^2}\rightarrow0,\
\ as\ \ n\rightarrow+\infty.
 \end{align}
 As a result, we can replace
$e^{-it_{n}^{j}\Delta^2} \psi^{j}$ by
$NLS(-t_{n}^{j})\tilde{\psi}^{j}$ in \eqref{5.7} and obtain
\begin{align*}
u_{n,0}(x)=\sum_{j=1}^{M}NLS(-t_{n}^{j})\tilde{\psi}^{j}(x)+\tilde{W}_{n}^{M}(x),
\end{align*}
with $
\lim_{M\rightarrow+\infty}[\lim_{n\rightarrow+\infty}\|e^{it\Delta^2}\tilde{W}_{n}^{M}\|_{Z(\mathbb
R)}]=0.$

In order to use the perturbation theory to get a contradiction, we set
$v^j(t)=NLS(t)\tilde{\psi}^{j}$, $u_n(t)=NLS(t)u_{n,0}$ and set
$\tilde{u}_n(t)=\sum_{j=1}^{M}v^j(t-t_{n}^{j}).$
Then we have
$$i\partial_t\tilde{u}_n+\Delta^2\tilde{u}_n-|\tilde{u}_n|^{p-1}\tilde{u}_n=e_n,$$
where $$e_n=\sum_{j=1}^{M}|v^j(t-t_{n}^{j})|^{p-1}v^j(t-t_{n}^{j})-|\sum_{j=1}^{M}v^j(t-t_{n}^{j})|^{p-1}\sum_{j=1}^{M}v^j(t-t_{n}^{j}).$$
We will prove the following two claims to get the contradiction:\\
~Claim 1. There exists a large constant $A$ independent of $M$ such
that
for any $M$, there exists $n_0=n_0(M)$ such that for $n>n_0$, $\|\tilde{u}_n\|_{Z(\mathbb R)}\leq A.$\\
~Claim 2. For each $M$  and $\epsilon>0$,  there exists
$n_1=n_1(M,\epsilon)$ such that for $n>n_1$
$\|e_n\|_{Z'(I)}\leq \epsilon.$\\
Note that,   since
$\tilde{u}_n(0)-u_n(0)=\tilde{W}_{n}^{M},$ there exists
$M_1=M_1(\epsilon)$ such that for each $M>M_1$ there exists
$n_2=n_2(M)$ satisfying
$\|e^{it\Delta^2}(\tilde{u}_n(0)-u_n(0))\|_{Z(\mathbb R)}\leq
\epsilon$  with $\epsilon<\epsilon_0$ as in Lemma
\ref{perturb}. Thus, if the two claims hold true,  by the  long-time perturbation theory (Lemma
\ref{perturb}), for $n$ and $M$ large enough, we obtain
$\|u_n\|_{Z(\mathbb R)}<+\infty,$ which is a contradiction. So it
suffices to show the above claims.

Let $M_0$ sufficiently large such that
$\|e^{it\Delta^2}\tilde{W}_{n}^{M_0}\|_{Z(\mathbb R)}\leq\frac12
\delta_{sd}.$ Thus we get from the definition of
$\tilde{W}_{n}^{M_0}$ that for each $j>M_0$,
$\|e^{it\Delta^2}v^j(-t_{n}^{j})\|_{Z(\mathbb R)}\leq \delta_{sd}.$
Similar to the small data scattering and Proposition \ref{wave
operator}, we obtain
\begin{align}\label{5.10}
\|v^j(t-t_{n}^{j})\|_{Z(\mathbb R)}\leq  2
\|e^{it\Delta^2}v^j(-t_{n}^{j})\|_{Z(\mathbb R)}\leq 2\delta_{sd},\ \ \
 \|D^{s_c}v^j\|_{2}\leq c\|\psi^j\|_{\dot H^{s_c}}.
\end{align}
Thus by
the elementary  inequality
$$|(\sum_{j=1}^Ma_j)^p-\sum_{j=1}^Ma_j^p|\leq c_M\sum_{j\neq k}|a_j||a_k|^{p-1}, p>1, a_j\geq0$$
and \eqref{nl}, we have that
\begin{align}\label{5.11}
\|\tilde{u}_n\|^{\frac{(N+4)(p-1)}{4}}_{Z(\mathbb R)}
&=\sum_{j=1}^{M_0}\|v^j(t-t_{n}^{j})\|^{\frac{(N+4)(p-1)}{4}}_{Z(\mathbb
R)}
+\sum_{j=M_0+1}^{M}\|v^j(t-t_{n}^{j})\|^{\frac{(N+4)(p-1)}{4}}_{Z(\mathbb R)}+crossterms\\
\nonumber
&\leq\sum_{j=1}^{M_0}\|v^j\|^{\frac{(N+4)(p-1)}{4}}_{Z(\mathbb R)}
+c\sum_{j=M_0+1}^{M}\|\psi^{j}\|^{\frac{(N+4)(p-1)}{4}}_{\dot{H}^{s_c}}+crossterms.
\end{align}
In view of \eqref{divergence}, by taking $n_0$ large enough,
the $crossterms$ can be made bounded. On the other hand, by
\eqref{5.7} and Lemma \ref{lpd},
\begin{align}\label{5.12}
\|u_{n,0}\|_{\dot{H}^{s_c}}^2
&=\sum_{j=1}^{M_0}\|\psi^j\|_{\dot{H}^{s_c}}^2
+\sum_{j=M_0+1}^{M}\|\psi^j\|_{\dot{H}^{s_c}}^2
+\|W_n^{M}\|_{\dot{H}^{s_c}}^2+o_n(1),
\end{align}
which shows that the quantity
$\sum_{j=M_0+1}^{M}\|\psi^j\|_{\dot{H}^{s_c}}^{\frac{(N+4)(p-1)}{4}}$
is bounded independently of $M$ since
$\frac{(N+4)(p-1)}{4}>\frac{N(p-1)}{4}>2$. Above all, \eqref{5.11}
gives that $\|\tilde{u}_n\|_{Z(\mathbb R)}$
 is bounded independently of $M$ for $n>n_0$ large enough.  So the first claim holds true.

On the other hand, since
\begin{align*}
e_n=\left(|v^j(t-t_{n}^{j})|^{p-1}-
|\sum_{k=1}^{M}v^k(t-t_{n}^{k})|^{p-1}\right)\sum_{j=1}^{M}v^j(t-t_{n}^{j}),
\end{align*}
then if $p-1>1$, we estimate
\begin{align*}
|e_n|\leq c\sum\sum_{k\neq j}^{M}|v^j(t-t_{n}^{j})|
|v^k(t-t_{n}^{k})|\Big(|v^j(t-t_{n}^{j})|^{p-2}+|v^k(t-t_{n}^{k})|^{p-2}\Big);
\end{align*}
while if $p-1<1$,
\begin{align*}
|e_n|\leq c\sum\sum_{k\neq j}^{M}|v^j(t-t_{n}^{j})|
|v^k(t-t_{n}^{k})|^{p-1}.
\end{align*}
Since by \eqref{divergence}, for $j\neq k$, $|t_{n}^{j}-
t_{n}^{k}|\rightarrow+\infty$, then
 we obtain that $\|e_n\|_{Z(\mathbb R)}$ goes to zero as $n\rightarrow\infty$,
concluding the second claim.

Up to now, we have reduced the profile expansion to the case that
$\psi^1\neq0$, and $\psi^j=0$ for all $j\geq2$. We now begin to show
the existence of a critical solution. By \eqref{5.9} we have
$M(\psi^1)\leq1,$ and by \eqref{5.8} we have
$\lim_nE(e^{-it_{n}^{1}\Delta^2}\psi^{1})\leq(M^{\frac{2-s_c}{s_c}}E)_c$.
If $t_{n}^{1}$ converges and, without loss of generality,
$t_{n}^{1}\rightarrow 0$ as $n\rightarrow+\infty,$ we take
$\tilde{\psi}^1=\psi^1$ and then we have
$\|NLS(-t_{n}^{1})\tilde{\psi}^1-e^{-it_{n}^{1}\Delta^2}\psi^1\|_{H^2}\rightarrow0$
as $n\rightarrow+\infty.$ If, on the other hand,
$t_{n}^{1}\rightarrow +\infty,$ then by the proof of Lemma
\ref{energy  expansion}, we have
$\lim_{n\rightarrow+\infty}\|e^{-it_{n}^{1}\Delta^2}\psi^{1}\|_{p+1}^{p+1}=0,$
and so $$\frac{1}{2}\|\Delta
\psi^{1}\|_2^2=\lim_nE(e^{-it_{n}^{1}\Delta^2}\psi^{1})\leq(M^{\frac{2-s_c}{s_c}}E)_c.$$
Thus, by Proposition \ref{wave operator}, there exist
$\tilde{\psi}^1$ such that $M(\tilde{\psi}^1)=M(\psi^1)\leq1,$
$E(\tilde{\psi}^1)=\frac{1}{2}\|\Delta
\psi^{1}\|_2^2\leq(M^{\frac{2-s_c}{s_c}}E)_c,$ and
$\|NLS(-t_{n}^{1})\tilde{\psi}^1-e^{-it_{n}^{1}\Delta}\psi^1\|_{H^2}\rightarrow0$
as $n\rightarrow+\infty.$  In either case, if we set
$\tilde{W}_n^M=W_n^M+(e^{-it_{n}^{1}\Delta^2}\psi^1-NLS(-t_{n}^{1})\tilde{\psi}^1),$
then by Strichartz estimates, we have
$$\|e^{it\Delta}\tilde{W}_{n}^{M}\|_{Z(\mathbb R)}
\leq\|e^{it\Delta^2}W_{n}^{M}\|_{Z(\mathbb R)}+
c\|e^{-it_{n}^{1}\Delta^2}\psi^1-NLS(-t_{n}^{1})\tilde{\psi}^1\|_{\dot
H^{s_c}},$$ and
thus$$\lim_{n\rightarrow+\infty}\|e^{it\Delta^2}\tilde{W}_{n}^{M}\|_{Z(\mathbb
R)}
=\lim_{n\rightarrow+\infty}\|e^{it\Delta^2}W_{n}^{M}\|_{Z(\mathbb
R)}.$$ Therefore, we have
 $u_{n,0}=NLS(-t_{n}^{1})\tilde{\psi}^1+\tilde{W}_{n}^{M}$
with $M(\tilde{\psi}^1)\leq1,$
$E(\tilde{\psi}^1)\leq(M^{\frac{2-s_c}{s_c}}E)_c$ and
$\lim_{M\rightarrow+\infty}[\lim_{n\rightarrow+\infty}\|e^{it\Delta^2}\tilde{W}_{n}^{M}\|_{Z(\mathbb
R)}]=0.$ Let $u_c$ be the solution of \eqref{1.1} with initial data
$u_{c,0}=\tilde\psi^1$. We claim that $\|u_c\|_{Z(\mathbb
R)}=\infty,$ and then it must hold that $M(u_c)=1$ and
$E(u_c)=(M^{\frac{2-s_c}{s_c}}E)_c$ by the  definition of
$(M^{\frac{2-s_c}{s_c}}E)_c$, concluding the proof. Contrarily, if
otherwise
 $A\equiv\|NLS(t-t_{n}^{1})\tilde{\psi}^1)\|_{Z(\mathbb R)}=\|NLS(t)\tilde{\psi}^1)\|_{Z(\mathbb R)}
=\|u_c\|_{Z(\mathbb R)}<\infty.$ From the  perturbation theory
(Lemma \ref{perturb}), we get a $\epsilon_0=\epsilon_0(A).$ Taking $M$
sufficiently large and $n_2(M)$ large enough such that for $n>n_2$,
it holds that $\|e^{it\Delta^2}\tilde W_{n}^{M}\|_{Z(\mathbb R)}\leq
\epsilon_0.$ Similar to the proof of the  first case, Lemma
\ref{perturb} implies that there exists a large $n$ such that
$\|u_n\|_{Z(\mathbb R)}<\infty,$ which is a contradiction .

\end{proof}

\begin{proposition}\label{precompact}
(Precompactness of the flow of the critical solution) Let $u_c$ be
as  in Proposition \ref{critical}. If
$\|u_c\|_{Z([0,\infty))}=\infty$, then
$$K=\{u_c(t)| ~t\in[0,+\infty)\}\subset H^2$$
 is precompact in $H^2$. A corresponding conclusion is reached if
 $\|u_c\|_{Z((-\infty,0])}=\infty$.

\end{proposition}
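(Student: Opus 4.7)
The plan is to argue by contradiction: assume $K$ is not precompact and produce a sequence $\{u_c(t_n)\}\subset K$ admitting no convergent subsequence in $H^2$. If $\{t_n\}$ were bounded, continuity of the $H^2$-flow (Proposition \ref{posedness}) would yield a convergent subsequence, so I may assume $t_n\to+\infty$. Applying the profile decomposition Lemma \ref{lpd} to $\{u_c(t_n)\}$ (uniformly bounded in $H^2$ by conservation of mass/energy together with Lemma \ref{4.5}) gives $u_c(t_n)=\sum_{j=1}^M e^{-it_n^j\Delta^2}\psi^j+W_n^M$ with the divergence \eqref{divergence}, tail smallness \eqref{remainder}, and Pythagorean expansion \eqref{hsexpansion}.

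The first key step is to reduce to a single profile. To each $\psi^j$ I associate a nonlinear profile $\tilde\psi^j$ as in the proof of Proposition \ref{critical}: if $t_n^j$ converges to $t'$, set $\tilde\psi^j=NLS(t')[e^{-it'\Delta^2}\psi^j]$; if $|t_n^j|\to\infty$, use the wave-operator Proposition \ref{wave operator}, whose hypothesis \eqref{4.11} is verified from the mass/energy Pythagorean expansions (Lemma \ref{energy  expansion}), the nonnegativity of the limiting energies (Lemma \ref{4.5}), and the strict threshold \eqref{me}. When at least two profiles are nonzero, strict inequality in the mass Pythagorean forces $M(\tilde\psi^j)^{\frac{2-s_c}{s_c}}E(\tilde\psi^j)<(M^{\frac{2-s_c}{s_c}}E)_c$ for every such $j$, hence $\|v^j\|_{Z(\mathbb R)}<\infty$ where $v^j(t)=NLS(t)\tilde\psi^j$, by definition of the critical threshold. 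Running the long-time perturbation Lemma \ref{perturb} with the approximate solution $\tilde u_n(s)=\sum_j v^j(s-t_n^j)$, exactly as in Proposition \ref{critical}, then yields $\|u_c\|_{Z([t_n,\infty))}<\infty$ for large $n$, contradicting $\|u_c\|_{Z([0,\infty))}=+\infty$. Hence only one profile $\psi^1$ survives.

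Next I rule out $t_n^1\to\pm\infty$. If $t_n^1\to-\infty$ along a subsequence, then
\[ \|e^{it\Delta^2}u_c(t_n)\|_{Z([0,\infty))} \le \|e^{it\Delta^2}\psi^1\|_{Z([-t_n^1,\infty))} + \|e^{it\Delta^2}W_n^M\|_{Z(\mathbb R)}, \]
and both terms can be driven below $\delta_{sd}$ by first fixing $M$ large (tail smallness of the linear evolution of $\psi^1$ on the vanishing interval $[-t_n^1,\infty)$) and then $n$ large via \eqref{remainder}. The small-data Proposition \ref{sd} applied to $u_c(\cdot+t_n)$ then gives $\|u_c\|_{Z([t_n,\infty))}\le 2\delta_{sd}$, contradicting $\|u_c\|_{Z([t_n,\infty))}=+\infty$ (the latter follows from $\|u_c\|_{Z([0,\infty))}=+\infty$ together with finiteness of $\|u_c\|_{Z([0,t_n])}$, obtained by iterating Proposition \ref{posedness} along the uniformly $H^2$-bounded orbit). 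The case $t_n^1\to+\infty$ is symmetric on $(-\infty,0]$, with vanishing interval $(-\infty,-t_n^1]$. I extract a subsequence with $t_n^1\to t^*\in\mathbb R$.

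To conclude, I revisit the one-profile analysis: strict inequality in either $M(\tilde\psi^1)<M(u_c)$ or $E(\tilde\psi^1)<E(u_c)$ would again scatter $u_c$ through perturbation, so both must be equalities, forcing $\|W_n^M\|_2\to 0$ and $E(W_n^M)\to 0$. The uniform $H^2$-bound and the Gagliardo--Nirenberg inequality \eqref{gn} give $\|W_n^M\|_{p+1}\to 0$ and hence $\|\Delta W_n^M\|_2\to 0$, i.e.\ $W_n^M\to 0$ in $H^2$. Continuity of the linear group at $t^*$ then yields $u_c(t_n)\to e^{-it^*\Delta^2}\psi^1$ in $H^2$; the backward case is symmetric. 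The main difficulty I anticipate is this last bootstrap: converting the merely $Z$-small remainder into a strongly $H^2$-small one requires the analysis to sit exactly at the critical mass-energy level, which relies delicately on the sharpness of Lemma \ref{4.5} together with the wave-operator Proposition \ref{wave operator} and the strict gap \eqref{me}.
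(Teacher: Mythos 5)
Your proof is correct and follows essentially the same approach as the paper: profile decomposition of $\{u_c(t_n)\}$, reduction to a single profile via the wave-operator construction and the long-time perturbation lemma, exclusion of $t_n^1\to\pm\infty$ by small-data theory on half-lines, and finally identification of the $H^2$ limit. The only cosmetic differences are that you handle bounded $t_n$ explicitly via continuity of the flow, you rule out $|t_n^1|\to\infty$ before (rather than after) closing the $W_n^M\to 0$ argument, and you obtain $\|\Delta W_n^M\|_2\to 0$ from Gagliardo--Nirenberg together with $\|W_n^M\|_2\to 0$ and $E(W_n^M)\to 0$, whereas the paper invokes Lemma \ref{4.5} directly; both routes are valid and interchangeable.
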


\begin{proof}
Take a sequence $t_n\rightarrow+\infty$. We argue that $u_c(t_n)$
has a subsequence converging in $H^2$. In the sequel, we write $u=u_c$ for short.
Take
 $\phi_n=u(t_n)$ in the profile expansion lemma \ref{lpd}
to obtain profiles $\psi^j$ and a remainder $W_n^M$ such that
$$u(t_n)=\sum_{j=1}^M e^{-it_{n}^{j}\Delta^2}\psi^j+W_n^M$$
with $|t_{n}^{j}-t_{n}^{k}|\rightarrow+\infty$ as
$n\rightarrow+\infty$ for any $j\neq k$. Then Lemma \ref{energy
expansion} gives
$$\sum_{j=1}^M\lim_{n\rightarrow+\infty}E(e^{-it_{n}^{j}\Delta^2}\psi^j)
+\lim_{n\rightarrow+\infty}E(W_n^M)=E(u)=(M^{\frac{2-s_c}{s_c}}E)_c.$$
Similar to the proof of Lemma \ref{4.5}, we know that each energy
is nonnegative and thus for any $j$,
$$\lim_{n\rightarrow+\infty}E(e^{-it_{n}^{j}\Delta^2}\psi^j)\leq(M^{\frac{2-s_c}{s_c}}E)_c.$$
Moreover by \eqref{hsexpansion}, we have
$$\sum_{j=1}^MM(\psi^j)
+\lim_{n\rightarrow+\infty}M(W_n^M)=\lim_{n\rightarrow+\infty}M(u(t_n))=1.$$

If more than one $\psi^j\neq0.$ Similar to the proof in Proposition
\ref{critical}, we a contradiction from the definition of the
critical solution $u=u_c$. Thus we will address the case that only
$\psi^1\neq0$ and $\psi^j=0$ for all $j>1,$ and so
\begin{align}\label{5.13} u(t_n)=
e^{-it_{n}^{1}\Delta^2}\psi^1+W_n^M.
\end{align}
Also as in the proof of Proposition \ref{critical}, we obtain that
$M(\psi^1)=1$,
$\lim_{n\rightarrow+\infty}E(e^{-it_{n}^{1}\Delta^2}\psi^1)=(M^{\frac{2-s_c}{s_c}}E)_c,$
$\lim_{n\rightarrow+\infty}M(W_n^M)=0$ and
$\lim_{n\rightarrow+\infty}E(W_n^M)=0.$ Thus by Lemma \ref{4.5}, we
get
\begin{align}\label{2}
\lim_{n\rightarrow+\infty}\|W_n^M\|_{H^2}=0.
\end{align}

We claim now that $t_{n}^{1}$ converges to some finite $t^1$ up to a
subsequence, and then,  since $
e^{-it_{n}^{1}\Delta^2}\psi^1\rightarrow  e^{-it^{1}\Delta^2}\psi^1$
in $H^2$, \eqref{2} implies
 that $u(t_n)$ converges in $H^2$, concluding our proof.
It suffices to show the above claim. Contrarily, if
$t_{n}^{1}\rightarrow-\infty,$ then
$$\|e^{it\Delta^2}u(t_n)\|_{Z([0,\infty))}\leq
\|e^{i(t-t_{n}^{1})\Delta^2}\psi^1\|_{Z([0,\infty))}
+\|e^{it\Delta^2}W_n^M\|_{Z([0,\infty))}. $$
Note that
$$\lim_{n\rightarrow+\infty}\|e^{i(t-t_{n}^{1})\Delta^2}\psi^1\|_{Z([0,\infty))}
=\lim_{n\rightarrow+\infty}\|e^{it\Delta^2}\psi^1\|_{Z([-t_{n}^{1},\infty))}=0$$
and $\|e^{it\Delta^2}W_n^M\|_{Z([0,\infty))}\leq
\frac{1}{2}\delta_{sd} $ by taking $n$ sufficiently large, in contradiction to
the small data scattering theory. If, on the other hand,
$t_{n}^{1}\rightarrow+\infty,$ we will similarly have
 $\|e^{it\Delta^2}u(t_n)\|_{Z((-\infty,0])}\leq\frac{1}{2}\delta_{sd}. $
Thus  the small data scattering theory (Proposition \ref{sd}) shows
that  $\|u\|_{Z((-\infty,t_n])}\leq \delta_{sd}.$
Since
$t_n\rightarrow+\infty$, by sending $n\rightarrow+\infty$, we obtain
$\|u\|_{Z((-\infty,+\infty))}\leq \delta_{sd},$ which is a contradiction
again.

\end{proof}

\begin{corollary}\label{compact-localization}
Let $u$ be a solution of \eqref{1.1} such that $K=\{u(t)|
~t\in[0,+\infty)\}$ is precompact in $H^2$. Then for each
$\epsilon>0,$ there exists $R>0$ independent of $t$ such that
$$\int_{|x|>R}|\Delta u(x,t)|^2+|u(x,t)|^2+|u(x,t)|^{p+1}dx\leq\epsilon.$$
\end{corollary}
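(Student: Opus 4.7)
The plan is to prove this by exploiting the total boundedness of the precompact set $K$ in $H^2$ together with the fact that a single $H^2$ function has small tails. More precisely, I would proceed by a finite-cover argument, but the statement could equally be framed as a contradiction using sequences and subsequences drawn from $K$.

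First, I would recall the Sobolev embedding $H^2(\mathbb{R}^N)\hookrightarrow L^{p+1}(\mathbb{R}^N)$, which is available because the hypothesis $p<1+\frac{8}{N-4}$ (for $N\ge 5$; unrestricted for $N\le 4$) forces $p+1<\frac{2N}{N-4}$, i.e.\ below the critical Sobolev exponent for $H^2$. This ensures that controlling an $H^2$-tail automatically controls the $L^{p+1}$-tail on the same region, via
\[
\Bigl(\int_{|x|>R}|u|^{p+1}dx\Bigr)^{\frac{1}{p+1}}=\|u\,\chi_{|x|>R}\|_{p+1}\le c\,\|u\,\chi_{|x|>R}\|_{H^2}.
\]
In particular, any fixed $v\in H^2$ satisfies
\[
\lim_{R\to\infty}\int_{|x|>R}\bigl(|\Delta v|^2+|v|^2+|v|^{p+1}\bigr)dx=0
\]
by dominated convergence and this embedding.

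Next, fix $\epsilon>0$. Since $K\subset H^2$ is precompact, it is totally bounded, so I can choose a finite set $v_1,\dots,v_m\in H^2$ (for instance in $K$ itself) such that for every $t\ge0$ there exists $j(t)\in\{1,\dots,m\}$ with $\|u(t)-v_{j(t)}\|_{H^2}<\delta$, where $\delta>0$ will be chosen small in terms of $\epsilon$ and the Sobolev constant. For each $j$, pick $R_j$ so large that the tail of $v_j$ in the combined $H^2$ and $L^{p+1}$ norms is bounded by $\epsilon/2$, and set $R=\max_j R_j$.

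For an arbitrary $t\ge0$, the triangle inequality in $L^2$ applied to $\Delta u(t)\chi_{|x|>R}$ and $u(t)\chi_{|x|>R}$, together with the triangle inequality in $L^{p+1}$ for $u(t)\chi_{|x|>R}$ and the Sobolev embedding, yield
\[
\Bigl(\int_{|x|>R}|\Delta u(t)|^2+|u(t)|^2+|u(t)|^{p+1}dx\Bigr)^{1/2}\le \Bigl(\int_{|x|>R}|\Delta v_{j(t)}|^2+|v_{j(t)}|^2+|v_{j(t)}|^{p+1}dx\Bigr)^{1/2}+C\delta,
\]
up to straightforward manipulations of the $L^{p+1}$ piece (using $p+1\ge 2$ and the elementary inequality $(a+b)^{p+1}\le c(a^{p+1}+b^{p+1})$). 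The first term is $\le\epsilon/2$ by construction, and choosing $\delta$ sufficiently small makes $C\delta\le\epsilon/2$. Since $j(t)\in\{1,\dots,m\}$ for every $t$ and $R$ is independent of $t$, this delivers the required uniform bound.

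The main obstacle, though a minor one, is bookkeeping the $L^{p+1}$ piece cleanly, since $L^{p+1}$-control is obtained only indirectly through Sobolev embedding rather than being a literal consequence of $H^2$ proximity on the tail region. Once one observes that $\|(u(t)-v_{j(t)})\chi_{|x|>R}\|_{p+1}\le\|u(t)-v_{j(t)}\|_{p+1}\le c\|u(t)-v_{j(t)}\|_{H^2}<c\delta$, the argument reduces to a standard finite-cover compactness proof, and the conclusion follows.
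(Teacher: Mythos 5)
Your proof is correct. It uses the same underlying idea as the paper — precompactness in $H^2$, plus the Sobolev embedding $H^2\hookrightarrow L^{p+1}$ and the tail-decay of a single $H^2$ function — but packages it differently. The paper argues by contradiction: negating the conclusion yields $\epsilon_0>0$ and times $t_n$ with $\int_{|x|>n}(|\Delta u(t_n)|^2+|u(t_n)|^2+|u(t_n)|^{p+1})\geq 2\epsilon_0$; by precompactness $u(t_n)\to\phi$ in $H^2$ along a subsequence, and the tails of the fixed function $\phi$ eventually fall below $\epsilon_0$, a contradiction. You instead use the finite-net (total boundedness) formulation: choose a finite $\delta$-net $v_1,\dots,v_m$, take $R$ large enough that each $v_j$ has small tail, and transfer smallness to an arbitrary $u(t)$ by the triangle inequality termwise in $L^2$, $\dot H^2$, and $L^{p+1}$ (the last via Sobolev). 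The two are standard equivalent incarnations of a compactness argument; yours is marginally more constructive, the paper's marginally shorter. One cosmetic remark: the display in which you take the square root of the sum of all three integrals and apply a ``triangle inequality'' is not literally a norm inequality — as you yourself note, one should handle the three integrals separately, bounding $\|\,\cdot\,\chi_{|x|>R}\|_2$, $\|\Delta(\cdot)\chi_{|x|>R}\|_2$, and $\|\,\cdot\,\chi_{|x|>R}\|_{p+1}$ each by the corresponding $v_j$-tail plus $O(\delta)$, then sum. With that bookkeeping made explicit, the argument is complete and matches the corollary.
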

\begin{proof}
Contrarily, if not, then there exists $\epsilon_0>0$ and a sequence
$t_n\rightarrow+\infty$ such that, for each $n$,
$$\int_{|x|>n}|\Delta u(x,t_n)|^2
+|u(x,t_n)|^2+|u(x,t_n)|^{p+1}dx\geq2\epsilon_0.$$ By the
precompactness of $K$, there exists some $\phi\in H^2$ such that, up
to a subsequence of $t_n$, $u(t_n)\rightarrow\phi$ in $H^2$. Thus
taking  $n$ large, we obtain
\begin{align}\label{phi}
\int_{|x|>n}|\Delta \phi(x)|^2+|\phi(x)|^2
+|\phi(x)|^{p+1}dx\geq\epsilon_0. \end{align}
On the other hand,
since
 $\phi\in H^2$ and $\|\phi\|_{p+1}\leq
c\|\phi\|_{H^2}$ by Sobolev embedding, by taking $n$ sufficiently
large, we have  $$\int_{|x|>n}|\Delta \phi(x)|^2+|\phi(x)|^2
+|\phi(x)|^{p+1}dx\leq \frac12\epsilon_0,$$ in contradiction to
\eqref{phi}.

\end{proof}

\section{ A Rigidity theorem}
In this section, we prove the following statement and finish the proof of Theorem \ref{th1}.
\begin{theorem}\label{rigidity}
Assume $u_0\in H^2$ is radial satisfying $\|u_0\|_2=1$,
$E(u_0)<E(Q)M(Q)^{\frac{2-s_c}{s_c}}$ and $\|\Delta
u_{0}\|_{2}<\|Q\|^{\frac{2-s_c}{s_c}}_{2}\|\Delta Q\|_{2}.$ Let $u$
be the corresponding solution of \eqref{1.1} with initial data
$u_0$. If $K_+=\{u(t):t\in[0,\infty)\}$ is precompact in $H^2$, then
$u_0\equiv0$. The same conclusion holds if
$K_-=\{u(t):t\in(-\infty,0]\}$ is precompact in $H^2$.
\end{theorem}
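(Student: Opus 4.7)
The plan is to run the Kenig--Merle rigidity argument adapted to the biharmonic setting: combine a localized virial identity with the precompactness of $K_+$ and the coercive lower bound of Lemma \ref{lower bound} to derive a contradiction. Since $|x|u_0$ need not lie in $L^2$, the standard virial must be truncated. Fix a radial $\phi\in C^\infty(\mathbb R^N)$ with $\phi(x)=|x|^2$ for $|x|\le 1$, constant for $|x|\ge 2$, rescale $\phi_R(x)=R^2\phi(x/R)$, and set
\[
V_R(t) = \int \phi_R(x)|u(x,t)|^2\,dx.
\]
Conservation of mass and the uniform bound $\|\Delta u(t)\|_2\le C$ from Theorem \ref{dichotomy} give $V_R(t)\le CR^2$; the first derivative, computed from \eqref{1.1} via integration by parts, equals $2\,\mathrm{Im}\int (\Delta\phi_R\cdot u + 2\nabla\phi_R\cdot\nabla u)\overline{\Delta u}\,dx$ and obeys $|V_R'(t)|\le C(R)$ uniformly in $t$, using $|\nabla\phi_R|\le CR$ and $|\Delta\phi_R|\le C$.

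Differentiating once more and expanding the commutators produced by pairing $\Delta^2 u$ against a weighted $L^2$ inner product yields
\[
V_R''(t) = 16\left(\|\Delta u(t)\|_2^2 - \frac{N(p-1)}{4(p+1)}\|u(t)\|_{p+1}^{p+1}\right) + \mathcal E_R(t),
\]
where $\mathcal E_R(t)$ collects all contributions arising from derivatives of $\phi_R-|x|^2$ of orders $1$ through $4$. Because $\phi_R(x)=|x|^2$ whenever $|x|\le R$, every term in $\mathcal E_R(t)$ is supported on $\{|x|\ge R\}$ and bounded by $C\int_{|x|\ge R}\bigl(|\Delta u|^2 + R^{-2}|\nabla u|^2 + R^{-4}|u|^2 + |u|^{p+1}\bigr)\,dx$ with $C$ independent of $R$ and $t$.

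Assume for contradiction that $u_0\not\equiv 0$. By Lemma \ref{4.5} this forces $E(u_0)>0$ and moreover $\|\Delta u(t)\|_2^2\ge c_1>0$ uniformly in $t$. The assumption $E(u_0)<M(Q)^{(2-s_c)/s_c}E(Q)$ with $\|u_0\|_2=1$ and $\|\Delta u_0\|_2<\|Q\|_2^{(2-s_c)/s_c}\|\Delta Q\|_2$ secures a margin $\delta>0$ so that Lemma \ref{lower bound} applies and produces $C_\delta>0$ with
\[
\|\Delta u(t)\|_2^2 - \frac{N(p-1)}{4(p+1)}\|u(t)\|_{p+1}^{p+1}\ge C_\delta\|\Delta u(t)\|_2^2 \ge C_\delta c_1.
\]
Corollary \ref{compact-localization} then allows us to fix $R$ so large that $|\mathcal E_R(t)|\le 8 C_\delta c_1$ for all $t\ge 0$. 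Hence $V_R''(t)\ge 8 C_\delta c_1>0$ uniformly in $t\ge 0$, and integrating on $[0,T]$ gives $V_R'(T)-V_R'(0)\ge 8 C_\delta c_1\, T$, which for $T$ large contradicts the $t$-uniform bound $|V_R'(t)|\le C(R)$ obtained above. Therefore $u_0\equiv 0$; the backward case is identical on $[-T,0]$.

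The main obstacle I anticipate is the careful derivation of the virial identity and the control of $\mathcal E_R(t)$: the biharmonic operator $\Delta^2$ produces, through successive integrations by parts against $\phi_R$, commutator terms involving derivatives of $\phi_R$ up to order four, as well as mixed cross-terms of the form $\int (\nabla^2\phi_R)\nabla u\cdot\overline{\nabla\Delta u}$. One must show that each such term genuinely vanishes when $\phi_R$ is replaced by $|x|^2$ and is otherwise bounded by an $R$-independent constant times the integrand appearing in Corollary \ref{compact-localization}. This bookkeeping, together with extracting the exact coefficient $\frac{N(p-1)}{4(p+1)}$ so as to align with Lemma \ref{lower bound}, constitutes the technical heart of the proof.
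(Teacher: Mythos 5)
Your overall strategy (localized virial $+$ coercivity from Lemma \ref{lower bound} $+$ uniform smallness of the tail from Corollary \ref{compact-localization}) is the right one, but the specific virial functional you chose does not produce the coercive quantity you need. The central claim
\[
V_R''(t) = 16\Bigl(\|\Delta u(t)\|_2^2 - \tfrac{N(p-1)}{4(p+1)}\|u(t)\|_{p+1}^{p+1}\Bigr) + \mathcal E_R(t)
\]
is false for the biharmonic equation. For $iu_t+\Delta^2 u-|u|^{p-1}u=0$ and $\phi_R(x)=|x|^2$ on the bulk, the first derivative is (as you correctly note) $V_R'(t)=-8\,\mathrm{Im}\int x\cdot\nabla\bar u\,\Delta u\,dx + \ldots$, which already carries one copy of $\Delta u$. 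Differentiating once more hits this with another $\Delta^2$, and after the standard integrations by parts the dispersion contribution becomes proportional to $\int|\nabla\Delta u|^2\,dx$, not $\int|\Delta u|^2\,dx$. This is forced by scaling: under $u\mapsto\lambda^{4/(p-1)}u(\lambda x,\lambda^4 t)$ one has $\partial_t^2\int|x|^2|u|^2 \sim \lambda^{\frac{8}{p-1}+6-N}$, which matches $\|\nabla\Delta u\|_2^2$ and not $\|\Delta u\|_2^2\sim\lambda^{\frac{8}{p-1}+4-N}$. So the second derivative of the mass variance lives one Sobolev derivative too high; Lemma \ref{lower bound} gives coercivity at the $\dot H^2$ level only, and the $H^2$-precompactness of $K_+$ does not control $\nabla\Delta u$ on $\{|x|\ge R\}$. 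The argument therefore cannot close as written.

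The fix, which is what the paper does, is to use a first-order (momentum-type) functional rather than the variance. Define
\[
A_R(t)=\mathrm{Im}\int x\,\phi\bigl(\tfrac{|x|}{R}\bigr)\cdot\nabla u\,\bar u\,dx,
\]
and take only one time derivative. A direct computation, integrating $\Delta^2 u$ by parts carefully, yields
\[
A_R'(t)=-4\int|\Delta u|^2\,dx+\frac{N(p-1)}{p+1}\int|u|^{p+1}\,dx+o_R(1),
\]
with $o_R(1)$ uniform in $t$ by Corollary \ref{compact-localization}. The right-hand side equals $-4$ times the coercive quantity of Lemma \ref{lower bound}, so $A_R'(t)\le -\delta_0\|\Delta u_0\|_2^2$ for large $R$, and integrating in $t$ contradicts the trivial uniform bound $|A_R(t)|\le C_R\|u\|_{H^2}^2$. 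Note that $A_R$ is \emph{not} the time derivative of $\int\phi_R|u|^2$ (for biharmonic NLS that derivative is $\sim\mathrm{Im}\int x\cdot\nabla u\,\overline{\Delta u}$, a genuinely different object); this is exactly the point your proposal misses. With $A_R$ in place, the rest of your argument --- invoking Lemmas \ref{lower bound} and \ref{4.5} and Corollary \ref{compact-localization} and concluding by contradiction --- goes through.
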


\begin{remark}
In view of Proposition \ref{precompact}, Theorem \ref{rigidity}
implies that $u_c$ obtained in Proposition \ref{critical} cannot
exist. Thus, there must holds that $(M^{\frac{2-s_c}{s_c}}E)_c
=E(Q)M(Q)^{\frac{2-s_c}{s_c}}$, which combined with Proposition
\ref{h2scattering} implies Theorem \ref{th1}. Then, it suffices to show Theorem \ref{rigidity}.
\end{remark}

\begin{proof}
 Define
\begin{align}\label{action}
A_R(t)\equiv Im\int x\phi(\frac{|x|}{R})\cdot \nabla u\bar udx,
\end{align}
where $\phi(r)\in C_c^\infty(\mathbb R)$ is equal to 1 when $r \leq 1$,  and to 0 when $r\geq 2$.
Then for the radial solution $u$ of the  problem \eqref{1.1}, we compute that
\begin{align*}
A'_R(t)=Im\int\nabla\left(x\phi(\frac xR)\right)u_t\bar udx+2Im\int
x\phi(\frac xR)\cdot\nabla u\bar u_t\equiv I_1+I_2,
\end{align*}
where
\begin{align*}
I_1=&-Re\int\nabla\left(x\phi(\frac xR)\right)\bar
u(\Delta^2u-|u|^{p-1}u)dx \\=&-N\int|\Delta
u|^2-|u|^{p+1}dx+N\int\left(1-\phi(\frac xR)\right)\left(|\Delta
u|^2-|u|^{p+1}\right)dx\\&-Re\int\frac x R\cdot\nabla(\frac x
R)\left(\bar u\Delta ^2u-|u|^{p+1}\right)dx\\=&-N\int|\Delta
u|^2-|u|^{p+1}dx+Res_1
\end{align*}and
\begin{align*}
I_2=&-2Re\int x\phi(\frac xR)\cdot \nabla\bar
u(\Delta^2u-|u|^{p-1}u)dx\\=&-2Re\int x\phi(\frac xR)\cdot
\nabla\bar u\Delta^2udx-\frac2{p+1}\int\nabla\left(x\phi(\frac
xR)\right)|u|^{p+1}dx\\=&Re\int\nabla\left(x\phi(\frac
xR)\right)|\Delta u|^{2}dx-2Re\int\partial^2_k(\phi
x_j)\partial_j\bar u\Delta udx\\&-4Re\int\partial_k(\phi
x_j)\partial_j\partial_k\bar u\Delta
udx-\frac2{p+1}\int\nabla\left(x\phi(\frac
xR)\right)|u|^{p+1}dx\\=&N\int|\Delta u|^2dx+N\int(\phi(\frac
xR)-1)|\Delta u|^2dx+Re\int\frac xR\cdot\nabla\phi(\frac xR)|\Delta
u|^2dx\\&-2Re\int\partial^2_k(\phi x_j)\partial_j\bar u\Delta
udx-4\int|\Delta u|^2dx+4\int(\phi(\frac xR)-1)|\Delta
u|^2dx\\&-4Re\int\partial_k\phi(\frac xR)\partial
{x_j}R\partial_j\partial_k\bar u\Delta
u-\frac2{p+1}\int\nabla\left(x\phi(\frac
xR)\right)|u|^{p+1}dx\\=&(N-4)\int|\Delta
u|^2dx-\frac{2N}{p+1}\int|u|^{p+1}dx\\&+
\frac{2N}{p+1}\int(1-\phi)|u|^{p+1}dx -\frac 2{p+1}\int\frac
xR\cdot\nabla\phi|u|^{p+1}dx\\=&(N-4)\int|\Delta
u|^2dx-\frac{2N}{p+1}\int|u|^{p+1}dx+Res_2.
\end{align*}
From Corollary \ref{compact-localization}, we can infer that
$|Res|=|Res_1+Res_2|=o_R(1)\rightarrow0$ as $R\rightarrow\infty$,
uniformly in $t\in[0,\infty)$. Finally, we have
\begin{align}\label{virial}
A'_R(t)=-4\int|\Delta u|^2dx+\frac{N(p-1)}{p+1}\int|u|^{p+1}dx+o_R(1),
\end{align}
where $o_R(1)\rightarrow0$ as $R\rightarrow+\infty$ uniformly in $t$.

Let a positive constant $\delta\in(0,1)$ be such that
$E(u_0)<(1-\delta)E(Q)M(Q)^{\frac{2-s_c}{s_c}}$.
 By
Lemma \ref{lower bound} and Lemma \ref{4.5}, we obtain that there
exists some constant $\delta_0>0$ such that
$$-4\int|\Delta
u|^2dx+\frac{N(p-1)}{p+1}\int|u|^{p+1}dx\leq-2\delta_0\int|\Delta
u_0|^2dx,$$ which implies by \eqref{virial} that
$A'_R(t)\leq-\delta_0\int|\Delta u_0|^2dx$ for $R$ sufficiently
large. Thus, we have
$$A_R(t)-A_R(0)\leq-\delta_0t\int|\Delta
u_0|^2dx.$$ On the other hand, by the definition of $A_R(t)$, we should
have
$$|A_R(t)-A_R(0)|\leq C_R\|Q\|^2_{H^2},$$
which is a contradiction for $t$ large unless $u_0=0$.

\end{proof}


{\bf Acknowledgements:} The author was supported by the Project of NSFC for Young Scholars of China(Grant No.11301564).

\end{document}